\newtheorem{theorem}{Theorem}[section]
\newtheorem{lemma}[theorem]{Lemma}
\newtheorem{example}{Example}
\newtheorem{proposition}[theorem]{Proposition}
\newtheorem{corollary}[theorem]{Corollary}
\newtheorem{conjecture}{Conjecture}
\newlength\cellsize \setlength\cellsize{15\unitlength}
\newcommand\cellify[1]{\def\thearg{#1}\def\nothing{}%
\ifx\thearg\nothing\vrule width0pt height\cellsize depth0pt%
  \else\hbox to 0pt{\usebox2\hss}\fi%
  \vbox to 15\unitlength{\vss\hbox to 15\unitlength{\hss$#1$\hss}\vss}}
\newcommand\tableau[1]{\vtop{\let\\=\cr
\setlength\baselineskip{-12000pt}
\setlength\lineskiplimit{12000pt}
\setlength\lineskip{0pt}
\halign{&\cellify{##}\cr#1\crcr}}}
\newcommand{\graybox}{\textcolor[RGB]{165,165,165}{\rule{1\cellsize}{1\cellsize}}\hspace{-\cellsize}\usebox2}
\newcommand{\grayboxtwo}{\textcolor[RGB]{220,220,220}{\rule{1\cellsize}{1\cellsize}}\hspace{-\cellsize}\usebox2}
\title[Some conjectures on the Schur expansion of Jack Polynomials]{Some conjectures on the Schur Expansion \\ of Jack Polynomials}  
\author[Per Alexandersson]{Per Alexandersson}
\address{Department of Mathematics, KTH Royal Institute of Technology, Stockholm, Sweden}
\email{per.w.alexandersson@gmail.com}
\author[James Haglund]{James Haglund}
\address{Department of Mathematics, University of Pennsylvania, Philadelphia, PA, 19104}
\email{jhaglund@math.upenn.edu}
\author[George Wang]{George Wang}
\address{Department of Mathematics, University of Pennsylvania, Philadelphia, PA, 19104}
\email{wage@math.upenn.edu}
\subjclass[2010]{Primary 05E05; Secondary 05A15}
\date{\today}
\keywords{Jack polynomials, Schur polynomials, quasi-Yamanouchi tableaux, Eulerian numbers, Stirling numbers, rook polynomials}
\begin{document}

\maketitle

\begin{abstract}
    We present positivity conjectures for the Schur expansion of Jack symmetric functions in two bases given by binomial coefficients. Partial results suggest that there are rich combinatorics to be found in these bases, including Eulerian numbers, Stirling numbers, quasi-Yamanouchi tableaux, and rook boards. These results also lead to further conjectures about the fundamental quasisymmetric expansions of these bases, which we prove for special cases.
\end{abstract}

\section{Introduction}
The (integral form, type $A$) Jack polynomials $J_\mu ^{(\alpha)} (X)$ are an important family of 
symmetric functions with applications to many areas, including statistics, mathematical physics, 
representation theory, and algebraic combinatorics. They depend on a set of variables $X$ and a 
parameter $\alpha$, and specialize into several other families of symmetric polynomials: monomial 
symmetric functions $m_\mu$ $(\alpha = \infty)$,  elementary symmetric functions 
$e_{\mu'}$ $(\alpha = 0)$, Schur functions $s_\mu$ $(\alpha = 1)$, and zonal 
polynomials $(\alpha = 1/2,\ \alpha = 2)$, each significant in their own right.  

Despite their relations to many well studied families of polynomials, Jack polynomials are comparatively poorly understood. 
One area that has seen some progress is their positivity in other bases.  From the definition of the $J_{\mu}^{(\alpha)}$, it is not obvious that the coefficients of the monomial expansion are in $\mathbb{Z}[\alpha]$, but this integrality conjecture was proven by Lapointe and Vinet  \cite{LaVi95b}.   A celebrated result of Knop and Sahi \cite{KnSa97} obtained later gives an explicit combinatorial formula for the expansion of $J_{\mu}^{(\alpha)}$ in the monomial basis, implying the stronger result that the coefficients lie in $\mathbb{N}[\alpha]$.  

Up until now there have not been any conjectures involving the expansion of $J_{\mu}^{(\alpha)}$  in the Schur basis;  the integrality result of 
Lapointe and Vinet implies these coefficients are in $\mathbb Z [\alpha]$, but computations show that they are
not generally in $\mathbb N [\alpha]$.    However, we have discovered that if we first define 
$$
{\tilde J}_{\mu}^{(\alpha)} (X) = \alpha ^n J_{\mu}^{(1/\alpha)} (X),
$$
then take the the coefficient of a given Schur function $s_{\lambda}(X)$ in ${\tilde J}_{\mu}^{(\alpha)} (X)$ 
and expand it either in the basis $\{ {\alpha + k \choose n } \}$ or in $\{{\alpha\choose k} k!\}$, the coefficients
appear to be nonnegative integers.  This opens up the intriguing question of whether or not these nonnegative integers have 
a combinatorial interpretation.   So far we been unable to find such an interpretation for general $\mu, \lambda$, but we hope this paper will inspire further research in this direction which will eventually lead to a solution to this question.

Our work on the $J_{\mu}^{(\alpha)}$ grew out of a conjecture about the (integral form, type A) Macdonald polynomials $J_{\mu}(X;q,t)$.
For $\mu$ a partition of $n$, it is well-known that
\begin{align*}
J_{\mu}^{(\alpha)}(X) = \lim _{t \to 1} J_{\mu}(X;t^{\alpha},t)/(1-t)^n,
\end{align*}
(see \cite[Chapter 6]{Macdonald} for background on Jack polynomials and Macdonald polynomials).  
The second author conjectures that for $k \in \mathbb N$ and $\mu, \lambda$ partitions of $n$,
\begin{align}
\label{Mconjecture}
\langle J_{\mu}(X;q,q^k)/(1-q)^n, s_{\lambda}(X) \rangle \in \mathbb N[q],
\end{align}
where $\langle\ ,\ \rangle$ is the usual Hall scalar product with respect to which the Schur functions are orthonormal, so Schur positivity seems to hold in a certain sense.   (It is a famous result of Mark Haiman \cite{Hai01} that the coefficients obtained by 
expanding $J_{\mu}(X;q,t)$ into the ``plethystic Schur" basis
$\{s_{\lambda}[X(1-t)]\}$ are in $\mathbb N[q,t]$, but little is known about expansions of $J_{\mu}(X;q,t)$ into the Schur basis.)  
The combinatorial formula of
Haglund, Haiman, and Loehr for the $J_{\mu}(X;q,t)$ \cite{HHL05b} implies that the coefficients obtained when expanding $J_{\mu}(X;q,q^k)/(1-q)^n$ into 
monomial symmetric functions are in $\mathbb N[q]$, and Yoo \cite{Yoo12, Yoo15} has proven that (\ref{Mconjecture}) holds
for certain important special cases, including when $\mu$ has two columns and when $\mu = (n)$.  

Beyond proving (\ref{Mconjecture}) 
in general, one could hope to be able to replace the nonnegative integer $k$ in the conjecture by a 
continuous parameter $\alpha$.  It is fairly easy to show that we have
\begin{align*}
{\tilde J}_{\mu}^{(\alpha)}(X) = \lim _{q \to 1} J_{\mu}(X;q,q^{\alpha})/(1-q)^n,
\end{align*}
which suggests that investigating the Schur expansion of $\tilde{J}_\mu^{(\alpha)}(X)$ may shed some light on this case. In this article we introduce the 
following conjectures for this expansion, which have been tested using J. Stembridge's Maple package SF 
\cite{STE}.

\begin{conjecture}
\label{C1}
Let $\mu$, $\lambda$ be partitions of $n$. Then setting
$$\langle \tilde{J}_\mu^{(\alpha)}(X), s_\lambda \rangle = \sum_{k=0}^{n-1}  a_{k}(\mu,\lambda){\alpha + k \choose n},$$
we have $a_{k}(\mu,\lambda) \in \mathbb{N}$.  Furthermore, the polynomial
$\sum_{k=0}^n a_k(\mu,\lambda) z^k$ has only real zeros.
\end{conjecture}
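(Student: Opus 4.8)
The plan is to first fix the analytic shape of the coefficient as a polynomial in $\alpha$, then convert both assertions into statements about an associated rational generating function, and finally attack nonnegativity and real-rootedness separately. Write $g_{\mu\lambda}(\alpha) := \langle \tilde J_\mu^{(\alpha)}, s_\lambda\rangle$. I would first record that $g_{\mu\lambda}$ is a polynomial in $\alpha$ of degree at most $n$ with $g_{\mu\lambda}(0)=0$. Polynomiality and the degree bound come from the Lapointe--Vinet integrality result together with the definition $\tilde J_\mu^{(\alpha)}=\alpha^n J_\mu^{(1/\alpha)}$: the Schur coefficients of $J_\mu^{(\beta)}$ are polynomials in $\beta$ of degree at most $n-\ell(\mu)\le n-1$ (the top monomial $m_\mu$ contributes degree $n-\ell(\mu)$, and lower-dominance monomials contribute less), so after the substitution $\beta=1/\alpha$ and multiplication by $\alpha^n$ the constant term vanishes and $g_{\mu\lambda}$ is divisible by $\alpha^{\ell(\mu)}$. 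Since each $\binom{\alpha+k}{n}$ with $0\le k\le n-1$ is a degree-$n$ polynomial vanishing at $\alpha=0$, these $n$ linearly independent polynomials span exactly the $n$-dimensional space of such polynomials, so the expansion with coefficients $a_k(\mu,\lambda)$ exists and is unique. This is the only routine step.

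Next I would reinterpret the $a_k$ through the summatory generating function. Using $\sum_{m\ge 0}\binom{m+k}{n}z^m=z^{n-k}/(1-z)^{n+1}$ for $0\le k\le n-1$, summing over $k$ gives
$$\sum_{m\ge 0} g_{\mu\lambda}(m)\, z^m=\frac{\sum_{k=0}^{n-1} a_k(\mu,\lambda)\,z^{\,n-k}}{(1-z)^{n+1}}.$$
Thus the numerator is $z$ times the reversal of $\sum_k a_k z^k$. This is an Ehrhart/$h^\ast$-type picture: integrality and nonnegativity of the $a_k$ become the statement that the integer sequence $m\mapsto g_{\mu\lambda}(m)$ has a numerator with nonnegative coefficients over $(1-z)^{n+1}$, and, since reversal preserves real-rootedness, the real-rootedness of $\sum_k a_k z^k$ is equivalent to that of the numerator. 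Concretely this yields the inclusion--exclusion formula $a_{n-j}=\sum_{i=0}^{j}(-1)^i\binom{n+1}{i}\,g_{\mu\lambda}(j-i)$, and the problem is to show these alternating sums are nonnegative integers.

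The crux is then to find a combinatorial model for the integers $g_{\mu\lambda}(m)=\langle \tilde J_\mu^{(m)}, s_\lambda\rangle$ that manifestly realizes the numerator as a generating function over a finite set weighted by a nonnegative statistic. The natural engine is the $\mathbb{N}[\alpha]$-positive Knop--Sahi monomial formula for $J_\mu^{(\alpha)}$; the obstruction is that passing from the monomial to the Schur basis introduces the signed inverse Kostka numbers, so one must construct a sign-reversing involution (or an RSK-type bijection) that cancels the negative contributions and leaves a positive count, presumably indexed by the quasi-Yamanouchi tableaux and rook-board configurations advertised in the abstract. I expect this to be the main obstacle: the raw Schur coefficients are \emph{not} in $\mathbb{N}[\alpha]$, so the positivity is genuinely basis-dependent and cannot come from term-by-term positivity; it must reflect a delicate cancellation that is not understood for general $\mu,\lambda$.

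Finally, for real-rootedness I would try to exploit whatever combinatorial description emerges above. The appearance of Eulerian numbers strongly suggests that in the tractable cases the numerator factors as a product of (shifted) Eulerian polynomials, which are classically real-rooted; more generally I would set up an interlacing family, or a three-term recursion in $n$ or in the parts of $\mu$, that preserves real-rootedness so that the zeros of successive numerators interlace. Absent an explicit product formula this is the most speculative part of the program, and I would expect real-rootedness to be provable only case by case alongside the combinatorial model rather than by a single uniform argument.
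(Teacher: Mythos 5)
You should note at the outset that the statement you were given is Conjecture~\ref{C1} of the paper, and the paper contains no proof of it: the authors support it only with computer verification through $n=11$ and with special cases ($\mu=(n)$, handled via $\lambda$-restricted Eulerian numbers, RSK, and quasi-Yamanouchi tableaux in Theorem~\ref{QYTtheorem}, and via Ferrers boards in Theorem~\ref{rookboardtheorem}; $\mu=\lambda$ a hook via hit numbers; $\mu=(1^n)$ via Eulerian numbers). Your proposal is likewise not a proof, as you candidly admit, and the gap is exactly the open problem itself: your third and fourth steps --- the sign-reversing involution that would cancel the inverse Kostka signs arising in the monomial-to-Schur transition, and the interlacing or factorization structure that would give real-rootedness --- are placeholders with no candidate construction. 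What you do establish is correct but preliminary: the $n$ polynomials $\binom{\alpha+k}{n}$, $0\le k\le n-1$, are indeed a basis of the space of degree-$\le n$ polynomials vanishing at $\alpha=0$, so the $a_k(\mu,\lambda)$ exist and are unique; and your Ehrhart-style identity $\sum_{m\ge 0} g_{\mu\lambda}(m)z^m=\bigl(\sum_{k} a_k(\mu,\lambda) z^{n-k}\bigr)/(1-z)^{n+1}$ with the resulting alternating-sum formula is sound. Combined with Lapointe--Vinet integrality (which gives $g_{\mu\lambda}(m)\in\mathbb{Z}$ for $m\in\mathbb{N}$), this even yields $a_k(\mu,\lambda)\in\mathbb{Z}$ unconditionally --- a modest but genuine observation that the paper does not state in this form; the paper raises integrality only when deducing Conjecture~\ref{C2} from Conjecture~\ref{C1}.

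It is also worth comparing your framing to the route the paper actually takes in its partial results, since the two differ. The paper never passes through the $h^{\ast}$-vector/generating-function picture; instead it starts from explicit product formulas for monomial coefficients (Macdonald's formula $\frac{n!}{\lambda!}\prod_{s\in\lambda}(\alpha+arm(s))$ for $\mu=(n)$, and the Knop--Sahi product for $\langle\tilde J_\mu,s_\mu\rangle$), converts products $\prod_i(\alpha+c_i-i+1)$ into the two bases at once by the Goldman--Joichi--White result (Proposition~\ref{rookboardprop}), and upgrades monomial information to Schur information by induction on dominance order together with an RSK argument (Lemma~\ref{RSKlemma}). In the cases where the conjecture is known, nonnegativity and real-rootedness come \emph{simultaneously} from this factorization (hit polynomials of Ferrers boards are real-rooted), which is precisely the rigorous version of your speculation that the numerator should factor into real-rooted pieces; so if you pursue your program, the evidence suggests looking for product or board-theoretic structure in $g_{\mu\lambda}$ rather than for a uniform involution. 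Finally, your degree claim for the Schur coefficients of $J_\mu^{(\beta)}$ (degree at most $n-1$ in $\beta$, giving $g_{\mu\lambda}(0)=0$) is asserted rather than proved; it is true and follows from the Knop--Sahi formula (the $m_\lambda$ coefficient has degree at most $n-\ell(\lambda)$), but as stated it is a citation-shaped hole you should fill.
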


\begin{conjecture}
\label{C2}
Let $\mu$, $\lambda$ be partitions of $n$. Then setting
$$\langle \tilde{J}_\mu^{(\alpha)}(X), s_\lambda \rangle = \sum_{k=1}^n b_{n-k}(\mu,\lambda) {\alpha \choose k} k! ,$$
we have $b_{n-k}(\mu,\lambda) \in \mathbb{N}$. Furthermore, the polynomial $\sum_{k=0}^n b_{n-k}(\mu,\lambda)z^k$ has only real zeros.
\end{conjecture}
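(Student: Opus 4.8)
The plan is to treat Conjectures \ref{C1} and \ref{C2} together, since both encode the single polynomial $p_{\mu,\lambda}(\alpha):=\langle \tilde J_\mu^{(\alpha)}(X),s_\lambda\rangle$ in two bases of the $n$-dimensional space of polynomials of degree at most $n$ that vanish at $\alpha=0$. Writing $\alpha^{\underline k}:=\binom{\alpha}{k}k!$ for the falling factorial, the Vandermonde convolution $\binom{\alpha+k}{n}=\sum_{j}\binom{k}{n-j}\binom{\alpha}{j}$ gives the $(\mu,\lambda)$-independent change of basis
\begin{align*}
b_{n-j}(\mu,\lambda)=\frac{1}{j!}\sum_{k=0}^{n-1}a_k(\mu,\lambda)\binom{k}{n-j}.
\end{align*}
Because every binomial coefficient here is nonnegative, this reduces the (rational) nonnegativity asserted in Conjecture \ref{C2} to the nonnegativity in Conjecture \ref{C1}; I would record this reduction first, noting that the sharper integrality statement $b_{n-k}\in\mathbb N$ still needs its own argument.

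For that integrality together with the real-rootedness, the natural mechanism is rook theory. I would try to exhibit an explicit Ferrers board $B=B_{\mu,\lambda}$ whose rook numbers satisfy $b_{n-k}(\mu,\lambda)=r_{n-k}(B)$. Granting this, integrality is immediate, and the Goldman--Joichi--White factorization theorem,
\begin{align*}
\sum_{k=0}^{n}r_{n-k}(B)\,\alpha^{\underline k}=\prod_{i=1}^{n}\bigl(\alpha+b_i-i+1\bigr),
\end{align*}
shows that $p_{\mu,\lambda}(\alpha)$ itself splits into integral linear factors. The polynomial whose zeros Conjecture \ref{C2} controls is $\sum_k b_{n-k}z^k=z^n R_B(1/z)$, the reciprocal of the ordinary rook polynomial $R_B(x)=\sum_k r_k(B)x^k$; hence its real-rootedness would follow from the classical fact that the rook polynomial of a Ferrers board has only real zeros. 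This is exactly where it matters that $B$ is Ferrers, since rook polynomials of arbitrary boards can fail to be real-rooted.

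To locate the board and prove the rook identity I would start from a manifestly positive combinatorial expansion of $\tilde J_\mu^{(\alpha)}$ --- the Knop--Sahi monomial formula, or its image under the $\alpha$-deformation of the Haglund--Haiman--Loehr statistics --- and extract $p_{\mu,\lambda}$ through $s_\lambda=\sum_{T}F_{\operatorname{Des}(T)}$, the sum over standard Young tableaux of shape $\lambda$. I expect that grouping this expansion by \emph{quasi-Yamanouchi} tableaux is what renders the coefficients nonnegative and, in good cases, reads off the column heights of $B_{\mu,\lambda}$. I would first confirm the picture in the accessible special cases: $\lambda=(n)$ and $\lambda=(1^n)$, a single-row or single-column $\mu$, hook shapes, and the two-column shapes already treated by Yoo, where the relevant Ferrers board should be transparent.

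The hard part --- and the reason the conjecture is likely to remain open in general --- is producing a \emph{uniform} board $B_{\mu,\lambda}$ (or any model carrying the factorization above) valid for all $\mu,\lambda$; this is precisely the combinatorial interpretation the authors report they could not find. Note that the reduction of the first paragraph does not rescue the argument here: even granting all of Conjecture \ref{C1}, the binomial transform relating the two coefficient vectors need not preserve real-rootedness, so the real-rooted half of Conjecture \ref{C2} does not follow formally and must be obtained from the rook model itself. I would therefore regard establishing the rook-theoretic model, case by case, as the genuine crux of the whole approach.
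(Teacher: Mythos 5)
This statement is one of the paper's open conjectures, so there is no proof in the paper for your attempt to match; the only argument the paper attaches to it is precisely your first paragraph. The Vandermonde identity $\binom{\alpha+k}{n}=\sum_i\binom{\alpha}{i}\binom{k}{n-i}$ appears in the paper immediately after Conjecture \ref{C2} with the same conclusion you draw: nonnegativity in Conjecture \ref{C1} yields $k!\,b_{n-k}(\mu,\lambda)\in\mathbb{N}$, leaving only integrality (and real-rootedness) unresolved. Your identification of the Goldman--Joichi--White rook mechanism as the engine behind the tractable cases also matches the paper, which proves exactly the special cases you list as accessible: $\mu=(n)$ (Theorem \ref{rookboardtheorem}, via the $q\to1$ limit of Yoo's theorem) and $\mu=\lambda$ a hook. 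You are correctly self-aware that this is a research plan rather than a proof, and the conjecture indeed remains open.

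There are, however, two concrete defects. First, your central ansatz --- a single Ferrers board $B_{\mu,\lambda}$ with $b_{n-k}(\mu,\lambda)=r_{n-k}(B_{\mu,\lambda})$, forcing $\langle\tilde{J}_\mu^{(\alpha)},s_\lambda\rangle$ to split into integral linear factors --- is provably too strong, already at $n=3$: for $\mu=\lambda=(2,1)$ the paper's formula $\langle\tilde{J}_\mu^{(\alpha)},s_\mu\rangle=\prod_{s\in\mu}(arm(s)+\alpha(leg(s)+1))$ gives $\alpha^2(2\alpha+1)$, which has leading coefficient $2$ and the root $-1/2$, whereas any constant multiple of a GJW product $\prod_i(\alpha+c_i-i+1)$ is, up to that constant, monic with integer roots. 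This is why the paper's hook-case proposition uses a \emph{positive integer combination of two} boards, and the $\mu=(n)$ case uses $K_{\lambda,1^n}$ times one board (there Yoo's theorem supplies the genuine split product $\prod_{(i,j)\in\lambda}(\alpha+i-j)$). Any uniform model must allow positive sums of boards, and then real-rootedness no longer comes for free, since sums of real-rooted polynomials need not be real-rooted. Second, a factual slip: rook polynomials of \emph{arbitrary} boards are always real-rooted (by Heilmann--Lieb applied to matching polynomials of bipartite graphs, as observed by Nijenhuis), so the Ferrers hypothesis is not where your reciprocal-polynomial step could fail; it is the \emph{hit} polynomials --- the coefficients in the $\binom{\alpha+k}{n}$ basis of Conjecture \ref{C1} --- for which Ferrers is essential, and that is what the paper's citation of \cite{HOW} covers.
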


It turns out that part one of Conjecture \ref{C1} (almost) implies Conjecture \ref{C2}.  The identity
$
{\alpha + k  \choose n} = \sum _{i} {\alpha \choose i} {k \choose n-i}
$
shows that if the $a_{k}(\mu,\lambda) \in \mathbb N$, then $k! b_{n-k}(\mu,\lambda) \in \mathbb N$, so if Conjecture \ref{C1} is true, the only issue is
whether or not the $b_{n-k}(\mu,\lambda)$ are integers.

Using Yoo's results on (\ref{Mconjecture}) and other methods, we prove various special cases of Conjectures $1$ and $2$, which suggest that there are rich combinatorics lurking in these expansions. In particular, we find that both Eulerian and Stirling numbers appear, as well as rook boards and generating functions of quasi-Yamanouchi tableaux. Our results also point towards attractive conjectures for the fundamental quasisymmetric expansion in each basis.

\section{Preliminaries}

A \textit{partition} $\mu = (\mu_1 \geq \mu_2 \geq \cdots \geq \mu_k > 0)$ is a finite sequence of non-increasing positive integers. The \textit{size} of $\mu$, denoted $|\mu|$, is the sum of the integers in the sequence. The \textit{length} of $\mu$ is the number of integers in the partition, denoted $\ell(\mu)$. We say that $\mu$ \textit{dominates} $\lambda$ if $\mu_1 + \cdots + \mu_i \geq \lambda_1 + \cdots + \lambda_i$ for all $i \geq 1$. If $\mu$ dominates $\lambda$, then we write $\mu \geq \lambda$, forming a partial order on partitions.
We use French notation to draw the \textit{diagram} corresponding to a partition $\mu$ by having left justified rows of boxes starting at the bottom, where the $i$th row has $\mu_i$ boxes. The conjugate of $\mu$ is obtained by taking the diagram of $\mu$ and reflecting across the diagonal.

Bases of the ring of symmetric functions are indexed by partitions, and in particular we write $e_\mu$, $h_\mu$, $m_\mu$, and $p_\mu$ to denote the elementary, complete homogeneous, monomial, and power-sum symmetric functions respectively and $s_\mu$ to denote the Schur functions. We also write $F_\sigma(x)$ for the fundamental quasisymmetric function, where $\sigma \subseteq \{1,\ldots, n-1\}$ and 
$$F_\sigma(x) = \sum_{\substack{i_1 \leq \cdots \leq i_n \\ j\in \sigma \Rightarrow i_j < i_{j+1}}} x_{i_1}\cdots x_{i_n}.$$





\subsection{Eulerian and Stirling numbers}

A permutation $\pi = \pi_1 \pi_2 \cdots \pi_n \in S_n$ has a descent at position $i$ if $\pi_i > \pi_{i+1}$. Write $A(n,k)$ for the Eulerian number counting permutations in $S_n$ with $k$ descents and $S(n,k)$ for the Stirling number of the second kind counting the ways to partition $n$ labelled objects into $k$ nonempty, unlabelled subsets. 

For $|\lambda| = n$, we define the set of \emph{$\lambda$-restricted permutations} to be permutations where $1, 2,\ldots, \lambda_1$ appear in order, $\lambda_1+1, \ldots, \lambda_1 +\lambda_2$ appear in order, and so on. We define the $\lambda$-restricted Eulerian number $A(\lambda,k)$ to be the the number of $\lambda$-restricted permutations in $S_n$ with $k$ descents.
\begin{example}
The $10$ $(3,2)$-restricted permutations are $12345$, $12435$, $14235$, $41235$, $12453$, $14253$, $41253$, $14523$, $41523$, and $45123$.
\end{example}

\subsection{Quasi-Yamanouchi Tableaux}
A \textit{semistandard Young tableau} $T$ is a filling of the diagram of a partition $\mu$ using positive integers that weakly increase to the right and strictly increase upwards. We say that $T$ has shape $\mu$ and write SSYT$_m(\mu)$ to denote the set of semistandard Young tableaux of shape $\mu$ with maximum value at most $m$.

If $w_i = w_i(T)$ is the number of entries of $T$ with value $i$, then we say that $T$ has \textit{weight} $w = (w_1, w_2, \ldots)$. The Kostka number $K_{\mu\lambda}$ counts the number of semistandard Young tableaux of shape $\mu$ and weight $\lambda$. 
Let $|\mu| = n$. A \textit{standard Young tableau} is a semistandard Young tableau with weight $(1^n)$. The set of standard Young tableaux of shape $\mu$ is SYT$(\mu)$.

We say that an entry $i$ is weakly left of an entry $j$ in a tableau $T$ when $i$ is above or above and to the left of $j$. The \textit{descent set} of a tableaux $T$ is the set of entries $i \in Des(T) \subseteq \{1,\ldots,n-1\}$ such that $i+1$ is weakly left of $i$, and we write $des(T) = |Des(T)|$. Given a descent set $Des(T) = \{d_1, d_2,\ldots,d_{k-1}\}$, the $i$th run of $T$ is the set of entries from $d_{i-1}+1$ to $d_{i}$, where $1 \leq i \leq k$, $d_0 = 0$, and $d_k = n$.

\begin{example} A standard Young tableau with five runs. The first and fourth runs are highlighted, and the tableau has descent set $\{3,6,8,11\}$.
\begin{displaymath}
	\tableau{
	\mathclap{\graybox}\mathclap{\raisebox{4\unitlength}{9}}&\mathclap{\graybox}\mathclap{\raisebox{4\unitlength}{10}}& 12\\
	4&5&7&\mathclap{\graybox}\mathclap{\raisebox{4\unitlength}{11}}\\
	\mathclap{\grayboxtwo}\mathclap{\raisebox{4\unitlength}{1}}&\mathclap{\grayboxtwo}\mathclap{\raisebox{4\unitlength}{2}}&\mathclap{\grayboxtwo}\mathclap{\raisebox{4\unitlength}{3}} & 6&8\\
	}
	\end{displaymath}
\end{example}

A semistandard Young tableau is \textit{quasi-Yamanouchi} if when $i$ appears in the tableau, the leftmost instance of $i$ is weakly left of some $i-1$. QYT$_{\leq m}(\mu)$ is the set of quasi-Yamanouchi tableaux with maximum value at most $m$, and QYT$_{=m}(\mu)$ is the set of quasi-Yamanouchi tableaux with maximum value exactly $m$. 

\begin{example} All the quasi-Yamanouchi tableaux of shape $(2,2,1)$, demonstrating that $\textnormal{QYT}_{=3}(2,2,1)=3$ and $\textnormal{QYT}_{=4}(2,2,1) = 2$.
\begin{displaymath}
	\tableau{
	3\\
	2&2\\
	1&1\\}
	\ \ \ \ \ \tableau{
	3\\
	2&3\\
	1&1\\}
	\ \ \ \ \ \tableau{
	3\\
	2&3\\
	1&2\\}
	\ \ \ \ \ \tableau{
	4\\
	2&3\\
	1&2\\}
	\ \ \ \ \ \tableau{
	3\\
	2&4\\
	1&3\\}
	\end{displaymath}
\end{example}

Quasi-Yamanouchi tableaux first arose as objects of interest in the work of Assaf and Searles \cite{AsSe17}, where they showed that quasi-Yamanouchi tableaux could be used to tighten Gessel's expansion of Schur polynomials into fundamental quasisymmetric polynomials. Some of their combinatorial properties and a partial enumeration can be found in \cite{Wa16}.

A property of particular interest is that quasi-Yamanouchi tableaux of shape $\mu$ have a natural bijection with standard Young tableaux of shape $\mu$ via the destandardization map that Assaf and Searles use \cite[Definition 2.5]{AsSe17}.
Given a semistandard Young tableau $T$, choose $i$ such that the leftmost $i$ is strictly right of the rightmost $i-1$, meaning to the right or below and to the right, or such that there are no $i-1$, and decrement every $i$ to $i-1$. Repeat until no more entries $i$ can be decremented. The resulting tableau is the \textit{destandardization} of $T$. For standard Young tableaux, this is a bijection, and the inverse gives the \emph{standardization} of a quasi-Yamanouchi tableau.

\begin{proposition}[\cite{AsSe17,Wa16}]
Let $\mu = (\mu_1, \mu_2, \ldots, \mu_k)$ be a partition of size $n$, then
$\textnormal{QYT}_{\leq n} (\mu) \cong \textnormal{SYT}(\mu).$
\end{proposition}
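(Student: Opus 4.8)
The plan is to verify that the destandardization map $\mathrm{dst}$ described above restricts to a bijection $\mathrm{SYT}(\mu) \to \mathrm{QYT}_{\le n}(\mu)$, with inverse the standardization map; the heart of the argument is a clean reformulation of $\mathrm{dst}$ in terms of the runs of a standard Young tableau.

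First I would check that each elementary decrement preserves semistandardness and that $\mathrm{dst}$ is well defined (independent of the order in which decrementable values are chosen). The decrement of $i$ to $i-1$ is triggered only when the leftmost $i$ lies strictly right of the rightmost $i-1$; this condition forbids an $i$ sitting immediately above an $i-1$ in a common column (which would be weakly left), so merging the values $i$ and $i-1$ never creates equal entries in one column, and rows plainly stay weakly increasing. A short confluence check then shows the normal form is independent of these choices. By construction the algorithm halts exactly when, for every value $i>1$ that is present, the leftmost $i$ is weakly left of some $i-1$ — precisely the quasi-Yamanouchi condition — and since values only decrease, the maximum stays at most $n$. Hence $\mathrm{dst}(T) \in \mathrm{QYT}_{\le n}(\mu)$ for every $T \in \mathrm{SYT}(\mu)$.

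The key observation I would then prove is that, applied to an $\mathrm{SYT}$ $T$, the map $\mathrm{dst}$ simply replaces each entry by the index of the run containing it: within a run $[d_{j-1}+1,d_j]$ consecutive entries are non-descents, so each such $i+1$ is strictly right of $i$ and gets decremented to merge with it, while across a descent $d_j$ the entry $d_j+1$ is weakly left of $d_j$ and the merge is blocked. Thus $\mathrm{dst}(T)$ has value-classes exactly equal to the runs of $T$, using the values $1,\dots,\mathrm{des}(T)+1$. This description makes the inverse transparent: to standardize $Q \in \mathrm{QYT}_{\le n}(\mu)$, assign to the cells of value $j$ the consecutive labels $d_{j-1}+1,\dots,d_j$ in order of increasing column (equal values lie in distinct columns, so this is unambiguous). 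Since the increasing-column labeling creates no descent inside a value-class, each class becomes a single run, and the two maps are then visibly inverse once we know the class boundaries are genuine descents.

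The main obstacle is exactly this last point: verifying that a descent is forced between consecutive value-classes, which is where the quasi-Yamanouchi hypothesis is essential. Concretely, I must show that the leftmost (smallest-column) cell of value $j+1$ is weakly left of the rightmost (largest-column) cell of value $j$, so that the boundary label $d_j$ is a descent of $\mathrm{std}(Q)$ and $\mathrm{dst}$ recovers $Q$. The quasi-Yamanouchi condition guarantees the leftmost $j+1$ is weakly left of \emph{some} $j$, and the bulk of the work is upgrading ``some'' to the required boundary cell, using the column-monotonicity of the labeling together with the semistandardness of $Q$. Once this is established, $\mathrm{dst}\circ\mathrm{std}$ and $\mathrm{std}\circ\mathrm{dst}$ are the respective identities, and the bijection, hence the claimed equinumerosity, follows.
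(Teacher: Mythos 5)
Your proposal is correct and follows essentially the same route as the paper, which states this proposition by citing Assaf--Searles and Wang and relies on exactly the destandardization/standardization bijection you develop: destandardization of a standard tableau records run indices, and the quasi-Yamanouchi condition (upgraded via semistandardness from ``weakly left of some $j$'' to ``weakly left of the rightmost $j$'') makes standardization its inverse. You have in effect reconstructed the cited proof, filling in details (confluence, the boundary-descent check) that the paper delegates to \cite{AsSe17,Wa16}.
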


Note that by definition, QYT$_{=m}(\mu)=0$ for any $|\mu| = n$ and $m>n$, so QYT$_{\leq n}(\mu)$ contains all quasi-Yamanouchi tableaux of shape $\mu$. Since we can partition QYT$_{\leq n}(\mu)$ into $\{\textrm{QYT}_{=m}(\mu)\  |\ 1\leq m \leq n\}$, this bijection gives a refinement on standard Young tableaux. In particular, the image of QYT$_{=m}(\mu)$ is the subset of SYT$(\mu)$ with exactly $m$ runs, or equivalently, $m-1$ descents.
We will also use the following result, which is obtained through a bijection consisting of standardizing a quasi-Yamanouchi tableau, conjugating, and then destandardizing.
\begin{lemma}[\cite{Wa16}]\label{QYTsymmetry}
Given a partition $\lambda$ of $n$, its conjugate $\lambda'$, and $1\leq k \leq n$, $\textnormal{QYT}_{=k}(\lambda) \cong \textnormal{QYT}_{=(n+1)-k}(\lambda')$.
\end{lemma}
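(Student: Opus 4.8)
The plan is to realize the claimed bijection as a composite of three maps, exactly as the sentence preceding the statement suggests: standardize, conjugate, destandardize. The Proposition, together with the discussion following it, supplies the main tool. Standardization is a bijection $\textnormal{QYT}_{\leq n}(\mu) \to \textnormal{SYT}(\mu)$ that restricts, for each $m$, to a bijection from $\textnormal{QYT}_{=m}(\mu)$ onto the set of standard tableaux of shape $\mu$ with exactly $m$ runs, i.e.\ with $m-1$ descents. I would use this to translate the entire statement into the language of descent sets of standard Young tableaux, where conjugation is transparent.

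Concretely, I would first apply standardization to identify $\textnormal{QYT}_{=k}(\lambda)$ with the set of $T \in \textnormal{SYT}(\lambda)$ having exactly $k-1$ descents. Next, conjugation $T \mapsto T'$ (reflecting the filling across the main diagonal) is a bijection $\textnormal{SYT}(\lambda) \to \textnormal{SYT}(\lambda')$. Finally, destandardizing $T'$ lands us back in a set of quasi-Yamanouchi tableaux of shape $\lambda'$; to identify which one, I only need to count the runs of $T'$. Since each of the three maps is a bijection, the composite is too, so it suffices to track the descent statistic through conjugation.

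The crux — and the step I expect to be the main obstacle — is to show that conjugation complements the descent set in the paper's French-notation convention, namely $\textnormal{Des}(T') = \{1,\ldots,n-1\} \setminus \textnormal{Des}(T)$, equivalently $\textnormal{des}(T) + \textnormal{des}(T') = n-1$. Here $i \in \textnormal{Des}(T)$ when $i+1$ sits in a strictly higher row than $i$, whereas $i \in \textnormal{Des}(T')$ when $i+1$ sits strictly to the right of $i$, since conjugation interchanges rows and columns. I would prove that for consecutive entries $i, i+1$ exactly one of these two alternatives occurs. Both ``bad'' configurations are excluded by the Young-diagram and monotonicity inequalities: if $i+1$ were both strictly above and strictly right of $i$, the cell in $i$'s row and $i+1$'s column would carry a value strictly between $i$ and $i+1$; and if $i+1$ were weakly below and weakly left of $i$ (the degenerate same-row or same-column cases being immediate), the cell in $i+1$'s row and $i$'s column — which exists because in French notation lower rows are at least as long — would be forced to lie simultaneously below $i$ and above $i+1$. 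Both are impossible, which pins down the trichotomy and yields the complementation.

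With the complementation in hand the bookkeeping closes at once: a tableau $T \in \textnormal{SYT}(\lambda)$ with $k-1$ descents conjugates to $T' \in \textnormal{SYT}(\lambda')$ with $(n-1)-(k-1) = n-k$ descents, hence with $n-k+1 = (n+1)-k$ runs, so destandardization sends it into $\textnormal{QYT}_{=(n+1)-k}(\lambda')$. Composing standardization, conjugation, and destandardization therefore gives the desired bijection $\textnormal{QYT}_{=k}(\lambda) \cong \textnormal{QYT}_{=(n+1)-k}(\lambda')$.
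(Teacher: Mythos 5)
Your proposal is correct and follows exactly the route the paper indicates (and attributes to \cite{Wa16}): standardize, conjugate, destandardize, using the fact that standardization sends $\textnormal{QYT}_{=m}(\mu)$ onto the standard tableaux with $m$ runs. The descent-complementation argument under conjugation, which the paper leaves implicit, is the right key step and your case analysis establishing $\textnormal{des}(T)+\textnormal{des}(T')=n-1$ is sound.
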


\subsection{Dual Equivalence}
We will need Assaf's dual equivalence graphs \cite{As15a}, although not in their full generality.
Define the \textit{elementary dual equivalence involution} $d_i$ on $\pi \in S_n$ for $1<i<n$ by $d_i(\pi) = \pi$ if $i-1$, $i$, and $i+1$ appear in order in $\pi$ and by $d_i(\pi) = \pi'$ where $\pi'$ is $\pi$ with the positions of $i$ and whichever of $i\pm1$ is further from $i$ interchanged when they do not appear in order. Two permutations $\pi$ and $\tau$ are \textit{dual equivalent} when $d_{i_1}\cdots d_{i_k}(\pi) = \tau$ for some $i_1,\ldots,i_k$. The reading word of a tableau is obtained by reading the entries from left to right, top to bottom, which for standard Young tableaux produces a permutation, and two standard Young tableaux of the same shape are dual equivalent if their reading words are.

We also use Assaf's characterization of Gessel's expansion of the Schur function into the fundamental quasisymmetric basis,
$$s_\mu = \sum_{T\in [T_\mu]} F_{Des(T)}(x),$$
where $[T_\mu]$ is the dual equivalence class of all standard Young tableaux of shape $\mu$, which is in fact all standard Young tableaux of shape $\mu$.

\subsection{Robinson-Schensted Correspondence} The Robinson-Schensted correspondence is a bijective algorithm between permutations in $S_n$ and pairs of standard Young tableaux $(P,Q)$ where $P$ and $Q$ have the same shape. For $\pi \in S_n$ written in one line notation, we apply an insertion procedure as follows.

Given a semistandard Young tableau $T$, insert the value $x_1$ by scanning for the first entry in the first row from the left which is larger than $x_1$. If none exists, then adjoin a new cell with $x_1$ to the end of this row and terminate the procedure. If such an $x_2>x_1$ does exist, replace its entry with $x_1$ and scan the second row for the first entry from the left larger than $x_2$.  If none exists, adjoin $x_2$ to the end of the second row. If such an $x_3>x_2$ exists, replace its entry with $x_2$ and repeat this process in the third row. Eventually this procedure must terminate, and we are left with a new tableau $T'$.

The Robinson-Schensted correspondence takes $\pi = \pi_1 \cdots \pi_n$ in one line notation and successively inserts $\pi_1, \ldots, \pi_n$ into the empty diagram to get the insertion tableau $P$. $Q$ is obtained by recording the order in which cells of $P$ are added by adjoining a cell containing $i$ after the insertion of $\pi_i$ such that the insertion tableau and recording tableau maintain the same shape at every step. We will refer to this algorithm as RSK, after the more general Robinson-Schensted-Knuth correspondence.

\subsection{Rook Boards}

Given an $n\times n$ grid, we can choose a subset $B$, which we call a \emph{board}. The $k$th \emph{rook number} of $B$, denoted $r_k(B)$, is the number of ways to place $k$ nonattacking rooks on $B$, and the $k$th \emph{hit number}
of $B$, denoted $h_k(B)$, is the number of ways to place $n$ nonattacking rooks on the grid with exactly $k$ on $B$. A \emph{Ferrers board} is one where if $(x,y)$ is in $B$, then every $(i,j)$ weakly southeast is also in $B$.
We will use the following result from Goldman, Joichi, and White \cite{GJW75} which translates certain products of factors into each of our bases.

\begin{proposition}\label{rookboardprop}
Let $0\leq c_1\leq c_2\leq \cdots \leq c_n \leq n$ with $c_i \in \mathbb{N}$, and let $B = B(c_1,\ldots, c_n)$ be the Ferrers board whose $i$th column has height $c_i$. Then
$$\prod_{i=1}^n (\alpha + c_i -i + 1) = \sum_{k=0}^n h_k(B){\alpha +k\choose n} = \sum_{k=0}^n r_{n-k}(B) {\alpha \choose k} k!$$
\end{proposition}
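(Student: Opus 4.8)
The plan is to prove the rightmost equality first, recognizing it as the Goldman--Joichi--White factorization theorem, and then to deduce the middle equality by combining it with the binomial change-of-basis identity $\binom{\alpha+k}{n}=\sum_i \binom{\alpha}{i}\binom{k}{n-i}$ recorded above together with a short double-counting relation between hit numbers and rook numbers. Throughout, each side of every asserted equality is a polynomial in $\alpha$ of degree $n$, so it suffices to verify the identities for all sufficiently large positive integers $\alpha=m$: agreement at infinitely many points forces equality of polynomials.

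For the rightmost equality, first I would augment $B$ to a board $B_m$ by appending a full $n\times m$ rectangle of rows beneath it, so that the $i$th column of $B_m$ has height $c_i+m$. I would then count, in two ways, the placements of $n$ nonattacking rooks on $B_m$ with one rook in each column. Reading columns left to right, when the rook in column $i$ is placed the previous $i-1$ rooks occupy $i-1$ distinct rows; since the heights are weakly increasing, those rows all lie within the $c_i+m$ rows available in column $i$, so exactly $c_i+m-(i-1)$ cells remain free. This gives $\prod_{i=1}^n(\alpha+c_i-i+1)$ evaluated at $\alpha=m$ (for $m\ge n$ every factor is positive, so the count is valid). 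Alternatively, grouping placements by the number $j$ of rooks lying on $B$, there are $r_j(B)$ ways to place those, and the remaining $n-j$ rooks must occupy distinct rows of the appended strip among the $n-j$ columns not used by $B$, an injection counted by $\binom{m}{n-j}(n-j)!$. Summing over $j$ and reindexing by $k=n-j$ yields $\sum_k r_{n-k}(B)\binom{m}{k}k!$, establishing the factorization.

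For the middle equality, I would expand $\sum_k h_k(B)\binom{\alpha+k}{n}$ using the stated binomial identity and interchange the order of summation to obtain $\sum_i \binom{\alpha}{i}\bigl(\sum_k h_k(B)\binom{k}{n-i}\bigr)$. Since the polynomials $\binom{\alpha}{i}$ are linearly independent, comparing this with $\sum_i r_{n-i}(B)\binom{\alpha}{i}\,i!$ reduces everything to the single relation $\sum_k h_k(B)\binom{k}{j}=(n-j)!\,r_j(B)$ for each $j$ (taking $j=n-i$). This in turn follows from a direct bijective count: the left-hand side enumerates pairs $(P,S)$ where $P$ places $n$ nonattacking rooks on the full $n\times n$ grid and $S$ is a $j$-subset of the rooks of $P$ lying on $B$, while the right-hand side counts the same pairs by first choosing $S$ (a placement of $j$ nonattacking rooks on $B$, in $r_j(B)$ ways) and then completing it to a full placement via a permutation of the remaining $n-j$ rows and columns ($(n-j)!$ ways).

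The main obstacle is the justification of the column-by-column count in the factorization step: the claim that \emph{exactly} $i-1$ cells of column $i$ are blocked is precisely where the Ferrers shape and the monotonicity $c_1\le\cdots\le c_n$ are indispensable, since they guarantee that every previously occupied row is actually present among the rows of the current column. Once this is secured, the rest is routine bookkeeping, and the reduction of the middle equality to the hit--rook relation is purely formal given the binomial identity already at hand.
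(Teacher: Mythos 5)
The paper does not actually prove this proposition: it is imported verbatim from Goldman, Joichi, and White \cite{GJW75}, so there is no internal proof to compare against. Your argument is correct and complete, and it is in essence the classical proof of the cited results. The augmentation of $B$ by an $n\times m$ rectangle and the two-way count of full nonattacking placements is precisely the Goldman--Joichi--White factorization argument, and you correctly identify the crux: weak monotonicity $c_1\le\cdots\le c_n$ ensures every previously occupied row actually meets column $i$, so exactly $c_i+m-(i-1)$ cells remain free. Your reduction of the middle equality via the Vandermonde convolution $\binom{\alpha+k}{n}=\sum_i\binom{\alpha}{i}\binom{k}{n-i}$ (the same identity the paper records in its introduction) to the single relation $\sum_k h_k(B)\binom{k}{j}=(n-j)!\,r_j(B)$, proved by double-counting pairs $(P,S)$ of a full placement together with a $j$-subset of its rooks on $B$, is the standard Riordan--Kaplansky hit-polynomial identity, and the linear independence of $\binom{\alpha}{0},\ldots,\binom{\alpha}{n}$ (distinct degrees) justifies comparing coefficients. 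One cosmetic slip: ``columns not used by $B$'' should read ``columns not containing one of the $j$ rooks placed on $B$,'' but your count $\binom{m}{n-j}(n-j)!$ for the strip rooks is right, since a rook of $B_m$ not on $B$ must lie in the appended strip, whose rows are disjoint from those of $B$. The reduction to integer evaluations $\alpha=m\ge n$ is also sound, as every expression involved is a polynomial in $\alpha$ of degree at most $n$ and agreement at infinitely many points forces equality.
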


\section{Partial Results} 
\subsection{Eulerian and Stirling numbers}
We first became interested in Conjecture 1 and Conjecture 2 due to the following observation, which follows from the normalization property of Jacks and the way that $\alpha^n$ is written in these two bases.
\begin{proposition} \label{eulerianstirlingprop}
For a partition $\mu$, the coefficient of $m_{1^n}$ in $\tilde{J}_\mu^{(\alpha)}(X)$ is
$$\langle \tilde{J}_\mu^{(\alpha)}(X), h_{1^n}\rangle = n!\alpha^n = \sum_{k=0}^{n-1} n! A(n,k) {\alpha + k \choose n} = \sum_{k=1}^{n} n! S(n,k){\alpha \choose k} k!.$$
\end{proposition}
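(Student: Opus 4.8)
The plan is to split the statement into three logically independent pieces: the identification of the coefficient with a Hall pairing, the evaluation of that coefficient as $n!\alpha^n$, and the two expansions of $n!\alpha^n$ in the binomial bases. For the first piece I would use that $\{h_\lambda\}$ and $\{m_\lambda\}$ are dual with respect to the Hall scalar product, so that for any symmetric function $f=\sum_\lambda c_\lambda m_\lambda$ one has $\langle f, h_{1^n}\rangle = c_{1^n}$. Applied to $f=\tilde{J}_\mu^{(\alpha)}$, this gives the first equality, i.e. that $\langle \tilde{J}_\mu^{(\alpha)}, h_{1^n}\rangle$ is precisely the coefficient of $m_{1^n}$.

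Next I would invoke the standard normalization of the integral form Jack polynomials (see \cite[Chapter 6]{Macdonald}): the coefficient of $m_{1^n}$ in $J_\mu^{(\beta)}$ equals $n!$ for every partition $\mu$ of $n$ and every value of the parameter $\beta$. Since $\tilde{J}_\mu^{(\alpha)}(X)=\alpha^n J_\mu^{(1/\alpha)}(X)$, the prefactor $\alpha^n$ simply multiplies this coefficient, so that
$$\langle \tilde{J}_\mu^{(\alpha)}(X), h_{1^n}\rangle = \alpha^n\cdot n! = n!\alpha^n.$$
The point worth stressing is that the normalization is independent of both $\mu$ and the parameter; this independence is the one external fact the argument genuinely relies on.

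For the two remaining equalities, after dividing by $n!$ these are the statements $\alpha^n=\sum_{k=0}^{n-1}A(n,k)\binom{\alpha+k}{n}$ and $\alpha^n=\sum_{k=1}^{n}S(n,k)\binom{\alpha}{k}k!$, which are Worpitzky's identity and the classical expansion of a monomial into falling factorials $\binom{\alpha}{k}k!=\alpha^{\underline{k}}$ via Stirling numbers of the second kind. Rather than cite these separately, I would derive both uniformly from Proposition~\ref{rookboardprop} applied to the staircase Ferrers board $B=B(0,1,\ldots,n-1)$, whose $i$th column has height $c_i=i-1$. For this board every factor satisfies $\alpha+c_i-i+1=\alpha$, so the product $\prod_{i=1}^{n}(\alpha+c_i-i+1)$ collapses to $\alpha^n$, and Proposition~\ref{rookboardprop} expresses $\alpha^n$ simultaneously in both bases. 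It then remains only to identify the hit numbers $h_k(B)$ with the Eulerian numbers $A(n,k)$ and the rook numbers $r_{n-k}(B)$ with the Stirling numbers $S(n,k)$, both of which are the well-known rook-theoretic evaluations for the staircase board.

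I do not expect a genuine obstacle, since each ingredient is classical; the only points requiring care are correctly tracking the substitution $\alpha\mapsto 1/\alpha$ together with the $\alpha^n$ prefactor in the definition of $\tilde{J}_\mu^{(\alpha)}$, and verifying the identifications $h_k(B)=A(n,k)$ and $r_{n-k}(B)=S(n,k)$ for the staircase board so that Proposition~\ref{rookboardprop} reproduces exactly the Eulerian and Stirling coefficients appearing in the statement.
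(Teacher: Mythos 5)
Your proof is correct, and its skeleton matches the paper's (implicit) argument: the paper offers no detailed proof, saying only that the proposition ``follows from the normalization property of Jacks and the way that $\alpha^n$ is written in these two bases,'' i.e.\ the normalization $[m_{1^n}]J_\mu^{(\beta)} = n!$ (independent of $\mu$ and $\beta$), the $h$--$m$ duality under the Hall pairing, and then a direct citation of Worpitzky's identity $\alpha^n = \sum_{k=0}^{n-1} A(n,k)\binom{\alpha+k}{n}$ and the classical Stirling expansion $\alpha^n = \sum_{k=1}^{n} S(n,k)\binom{\alpha}{k}k!$. Where you genuinely diverge is in the last step: instead of invoking those two identities separately, you derive both at once from Proposition~\ref{rookboardprop} applied to the staircase board $B(0,1,\ldots,n-1)$, where each factor $\alpha + c_i - i + 1$ collapses to $\alpha$, and then identify $h_k(B) = A(n,k)$ and $r_{n-k}(B) = S(n,k)$. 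These identifications are indeed the classical rook-theoretic facts for the staircase board (a full rook placement is a permutation $\sigma$, a rook in column $i$ lies on $B$ exactly when $\sigma(i) < i$, and anti-excedances are equidistributed with descents; similarly $r_j$ of the staircase is $S(n,n-j)$), and the boundary terms $h_n(B) = r_n(B) = 0$ correctly account for the sum ranges $k \le n-1$ and $k \ge 1$ in the statement. What your route buys is uniformity and thematic coherence: it shows the $\mu = (1^n)$-flavored identity of this proposition is literally the same mechanism as the paper's later rook-board results (e.g.\ Theorem~\ref{rookboardtheorem}), at the modest cost of needing the two staircase evaluations, which are no harder than the identities the paper cites directly.
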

Combined with computer data confirming the two conjectures up to $n=11$, we have the first hint that these bases may have interesting combinatorial interpretations. Two immediate corollaries come from extracting the coefficient of $m_{1^n}$ from $s_\lambda$ in the Schur expansion of $\tilde{J}_\mu^{(\alpha)}(X)$.
\begin{corollary} \label{eulerianstirlingcor1}
Given a partition $\mu$, we have
$$\sum_{|\lambda| = n} \sum_{k=0}^{n-1} a_{k}(\mu,\lambda) K_{\lambda(1^n)} {\alpha + k \choose n} = \sum_{k=0}^{n-1} n!A(n,k){\alpha + k \choose n}.$$
\end{corollary}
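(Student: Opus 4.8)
The plan is to compute the coefficient of $m_{1^n}$ in $\tilde{J}_\mu^{(\alpha)}(X)$ in two different ways and equate the results. Proposition \ref{eulerianstirlingprop} already supplies one evaluation, namely $\langle \tilde{J}_\mu^{(\alpha)}(X), h_{1^n}\rangle = \sum_{k=0}^{n-1} n!\,A(n,k)\binom{\alpha+k}{n}$, which is exactly the right-hand side of the desired identity. The task is therefore to produce the left-hand side by extracting that same coefficient directly from the Schur expansion.

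First I would write out the Schur expansion $\tilde{J}_\mu^{(\alpha)}(X) = \sum_{|\lambda|=n}\langle \tilde{J}_\mu^{(\alpha)}(X), s_\lambda\rangle\, s_\lambda$ and substitute the defining expansion $\langle \tilde{J}_\mu^{(\alpha)}(X), s_\lambda\rangle = \sum_{k=0}^{n-1} a_{k}(\mu,\lambda)\binom{\alpha+k}{n}$ introduced in Conjecture \ref{C1}; note that only the \emph{existence} of this expansion is used here, not the conjectured positivity or real-rootedness. This yields $\tilde{J}_\mu^{(\alpha)}(X) = \sum_{|\lambda|=n}\sum_{k=0}^{n-1} a_{k}(\mu,\lambda)\binom{\alpha+k}{n} s_\lambda$. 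The key step is then to recall that for any symmetric function $f$ homogeneous of degree $n$, the coefficient of $m_{1^n}$ in $f$ equals $\langle f, h_{1^n}\rangle$, since $\{h_\nu\}$ and $\{m_\nu\}$ are dual bases with respect to the Hall inner product. Applying this together with $\langle s_\lambda, h_{1^n}\rangle = K_{\lambda(1^n)}$, the coefficient of $m_{1^n}$ in the displayed expansion is precisely $\sum_{|\lambda|=n}\sum_{k=0}^{n-1} a_{k}(\mu,\lambda)\, K_{\lambda(1^n)}\binom{\alpha+k}{n}$, the left-hand side of the corollary. Equating this with the evaluation furnished by Proposition \ref{eulerianstirlingprop} completes the argument.

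There is no serious obstacle: the statement is a direct bookkeeping consequence of Proposition \ref{eulerianstirlingprop} combined with the duality of the $h$ and $m$ bases. The only point deserving care is that the identity is asserted as an equality of polynomials in $\alpha$, so no appeal to linear independence of the $\binom{\alpha+k}{n}$ is needed to establish it as stated. Since those $n$ polynomials are nevertheless linearly independent, one may go further and match coefficients term by term to obtain the sharper relation $\sum_{|\lambda|=n} a_{k}(\mu,\lambda)\, K_{\lambda(1^n)} = n!\,A(n,k)$ for each $k$, which I would record as a remark immediately following the proof.
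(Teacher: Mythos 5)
Your proof is correct and matches the paper's approach: the corollary is obtained exactly as you describe, by extracting the coefficient of $m_{1^n}$ from the Schur expansion via $\langle s_\lambda, h_{1^n}\rangle = K_{\lambda(1^n)}$ and equating with the evaluation in Proposition \ref{eulerianstirlingprop}. Your closing remark that linear independence of the $\binom{\alpha+k}{n}$ yields the termwise refinement $\sum_{|\lambda|=n} a_k(\mu,\lambda)K_{\lambda(1^n)} = n!\,A(n,k)$ is a valid and worthwhile addition, consistent with the paper's observation that positivity of the $a_k(\mu,\lambda)$ would give a refinement of the Eulerian numbers.
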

\begin{corollary}
Given a partition $\mu$, we have
$$\sum_{|\lambda| = n} \sum_{k=1}^{n-1} b_{n-k}(\mu,\lambda) K_{\lambda(1^n)} {\alpha \choose k} k! = \sum_{k=1}^{n-1} n!S(n,k){\alpha \choose k} k!.$$
\end{corollary}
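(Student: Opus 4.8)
The plan is to prove this exactly as the Eulerian analogue in Corollary~\ref{eulerianstirlingcor1}, but working in the basis $\{{\alpha \choose k}k!\}$ rather than $\{{\alpha+k \choose n}\}$. Since the Schur functions are orthonormal for the Hall scalar product, I would first write $\tilde J_\mu^{(\alpha)}(X) = \sum_{|\lambda|=n} \langle \tilde J_\mu^{(\alpha)}(X), s_\lambda\rangle\, s_\lambda$ and pair both sides with $h_{1^n}$. Because the complete homogeneous and monomial bases are dual, $\langle s_\lambda, h_{1^n}\rangle$ is the coefficient of $m_{1^n}$ in $s_\lambda$, which is the Kostka number $K_{\lambda(1^n)}$. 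This gives
$$\langle \tilde J_\mu^{(\alpha)}(X), h_{1^n}\rangle = \sum_{|\lambda|=n} \langle \tilde J_\mu^{(\alpha)}(X), s_\lambda\rangle\, K_{\lambda(1^n)},$$
which is precisely the ``extract the coefficient of $m_{1^n}$ from each $s_\lambda$'' step referred to in the text.

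Next I would substitute the two available expansions of this quantity. On the right, insert the defining expansion from Conjecture~\ref{C2}, namely $\langle \tilde J_\mu^{(\alpha)}(X), s_\lambda\rangle = \sum_{k=1}^n b_{n-k}(\mu,\lambda){\alpha \choose k}k!$; on the left, insert the Stirling form from Proposition~\ref{eulerianstirlingprop}, $\langle \tilde J_\mu^{(\alpha)}(X), h_{1^n}\rangle = \sum_{k=1}^n n!\,S(n,k){\alpha \choose k}k!$. Both are now expansions of the single polynomial $n!\alpha^n$ in the family $\{{\alpha \choose k}k! = \alpha(\alpha-1)\cdots(\alpha-k+1)\}$, which is linearly independent and spans the polynomials of degree at most $n$. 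Matching coefficients of each basis element yields the per-$k$ identity $\sum_{|\lambda|=n} b_{n-k}(\mu,\lambda)K_{\lambda(1^n)} = n!\,S(n,k)$ for every $1\le k\le n$, and multiplying by ${\alpha \choose k}k!$ and summing over $1\le k\le n-1$ produces exactly the displayed identity.

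The one point requiring care is the truncation of the range to $n-1$. The omitted top term $k=n$ is governed entirely by the normalization of Jack polynomials: since ${\alpha \choose n}n!$ is the unique element of degree $n$ in the family, its coefficient on each side is just the leading coefficient of $n!\alpha^n$, giving $\sum_{|\lambda|=n} b_0(\mu,\lambda)K_{\lambda(1^n)} = n! = n!\,S(n,n)$, so the $k=n$ contributions already agree and may be dropped from both sides. I do not expect a genuine obstacle here, since the argument is unconditional: it never uses the positivity or integrality asserted in Conjecture~\ref{C2}, and treats the $b_{n-k}(\mu,\lambda)$ merely as the (a priori rational) coefficients that define that expansion. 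The only step to verify carefully is the legitimacy of the coefficient extraction, which is immediate from the linear independence of the falling factorials.
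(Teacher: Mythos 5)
Your proof is correct and is essentially the paper's own argument: the paper derives this corollary (like Corollary \ref{eulerianstirlingcor1}) immediately from Proposition \ref{eulerianstirlingprop} by extracting the coefficient of $m_{1^n}$ from each $s_\lambda$ --- i.e.\ pairing with $h_{1^n}$ to produce the Kostka numbers $K_{\lambda(1^n)}$ --- and comparing the two expansions of $n!\alpha^n$ in the linearly independent family $\bigl\{\binom{\alpha}{k}k!\bigr\}$, exactly as you do. Your additional remark about the $k=n$ term (which agrees on both sides since $S(n,n)=1$) correctly accounts for the truncated range in the statement, a detail the paper passes over silently.
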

Furthermore, if $a_k(\mu,\lambda) \in \mathbb{N}[\alpha]$ or $b_k(\mu,\lambda) \in \mathbb{N}[\alpha]$ in general, then the respective result above would indicate some refinement on Eulerian numbers or Stirling numbers of the second kind.

\subsection{Quasi-Yamanouchi Tableaux}

In the case of $\mu = (n)$ and $|\lambda| = n$, we noticed that the equality
$$\sum_{k=0}^{n-1} a_k((n),\lambda) = |\textrm{SYT}(\lambda)|$$
held for the computer generated data. Upon closer inspection, it appeared that in fact the following theorem was true.

\begin{theorem}\label{QYTtheorem}
Let $\lambda$ be a partition of $n$ and $\lambda'$ be its conjugate. Then for the coefficient of $s_\lambda$ in $\tilde{J}_\mu^{(\alpha)}(X)$
$$\langle \tilde{J}_{(n)}^{(\alpha)}(X), s_\lambda\rangle = \sum_{k=0}^{n-1} a_k((n),\lambda) {\alpha + k \choose n}$$
we have $a_k((n),\lambda) =n!  \textnormal{QYT}_{=k+1}(\lambda')$.
\end{theorem}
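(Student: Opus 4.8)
The plan is to reduce the theorem to a content-product formula for the single-row Jack polynomial and then to recognize that formula's binomial expansion through a principal specialization of Gessel's fundamental expansion.

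First I would compute $\langle \tilde{J}_{(n)}^{(\alpha)}(X), s_\lambda\rangle$ in closed form. Writing $c(i,j) = j - i$ for the content and $h(i,j)$ for the hook length of a cell, and using the one-row specialization $J_{(n)}^{(\beta)} = \beta^n n!\, g_n^{(\beta)}$, where $g_n^{(\beta)}$ is the Jack analogue of $h_n$ defined by $\prod_i (1 - x_i t)^{-1/\beta} = \sum_{n\ge 0} g_n^{(\beta)} t^n$, together with the content expansion $g_n^{(\beta)} = \sum_{\lambda\vdash n} s_\lambda \prod_{(i,j)\in\lambda} \tfrac{1/\beta + c(i,j)}{h(i,j)}$ (which follows by extracting $[t^n]$ from the standard identity $\prod_i(1-x_it)^{-u} = \sum_\lambda s_\lambda\, t^{|\lambda|}\prod_{(i,j)\in\lambda}\tfrac{u + c(i,j)}{h(i,j)}$), I substitute $\beta = 1/\alpha$ and multiply by $\alpha^n$. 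The powers of $\alpha$ cancel, and after the hook-length formula $n!/\prod_{(i,j)} h(i,j) = |\textnormal{SYT}(\lambda)|$ I expect the clean evaluation
$$\langle \tilde{J}_{(n)}^{(\alpha)}(X), s_\lambda\rangle = |\textnormal{SYT}(\lambda)|\prod_{(i,j)\in\lambda}\bigl(\alpha + c(i,j)\bigr).$$
As a consistency check, pairing against $h_{1^n}$ recovers $\sum_{\lambda\vdash n}|\textnormal{SYT}(\lambda)|^2\prod_{(i,j)}(\alpha + c(i,j)) = n!\,\alpha^n$, matching Proposition \ref{eulerianstirlingprop}.

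The heart of the argument is to expand this content product in the basis $\{\binom{\alpha+k}{n}\}$. Since both sides are polynomials in $\alpha$, it suffices to verify the identity at every positive integer $\alpha = N$. On one hand, the number of semistandard tableaux of shape $\lambda$ with entries in $\{1,\ldots,N\}$ is $s_\lambda(1^N) = \prod_{(i,j)\in\lambda}\tfrac{N + c(i,j)}{h(i,j)}$. On the other hand, applying Gessel's expansion $s_\lambda = \sum_{T\in\textnormal{SYT}(\lambda)} F_{Des(T)}$ and the specialization $F_\sigma(1^N) = \binom{N + n - 1 - |\sigma|}{n}$, I get $s_\lambda(1^N) = \sum_{T\in\textnormal{SYT}(\lambda)}\binom{N + n - 1 - des(T)}{n}$. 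Equating the two evaluations, multiplying by $n!$, and reindexing by $k = n-1-des(T)$ yields the polynomial identity
$$|\textnormal{SYT}(\lambda)|\prod_{(i,j)\in\lambda}(\alpha + c(i,j)) = n!\sum_{k=0}^{n-1}\#\{T\in\textnormal{SYT}(\lambda) : des(T) = n-1-k\}\binom{\alpha+k}{n}.$$

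It then remains to rewrite the descent count in terms of quasi-Yamanouchi tableaux of the conjugate shape. Transposing standard tableaux is a bijection $\textnormal{SYT}(\lambda)\to\textnormal{SYT}(\lambda')$ that complements descent sets, so $\#\{T\in\textnormal{SYT}(\lambda): des(T) = n-1-k\} = \#\{T\in\textnormal{SYT}(\lambda'): des(T) = k\}$; by the destandardization bijection this latter count is exactly the number of standard tableaux of shape $\lambda'$ with $k+1$ runs, namely $\textnormal{QYT}_{=k+1}(\lambda')$ (equivalently one may invoke Lemma \ref{QYTsymmetry} together with the run interpretation recorded earlier). Comparing coefficients of $\binom{\alpha+k}{n}$ gives $a_k((n),\lambda) = n!\,\textnormal{QYT}_{=k+1}(\lambda')$, as claimed. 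I expect the main obstacle to be the first step: pinning down the correct normalization relating $J_{(n)}^{(\alpha)}$, $g_n^{(\alpha)}$, and the content expansion, since a stray factor of $\alpha$ or of a hook product propagates through and destroys the cancellation. Once the closed form $|\textnormal{SYT}(\lambda)|\prod(\alpha + c(i,j))$ is secured, the specialization and the combinatorial identification are comparatively routine, resting only on the content formula for $s_\lambda(1^N)$, Gessel's expansion, and the transposition symmetry of standard tableaux.
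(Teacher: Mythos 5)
Your proof is correct, but it takes a genuinely different route from the paper's. You first derive the closed form $\langle \tilde{J}_{(n)}^{(\alpha)}(X), s_\lambda\rangle = |\textnormal{SYT}(\lambda)|\prod_{(i,j)\in\lambda}(\alpha + c(i,j))$ from Macdonald's one-row formula $J_{(n)}^{(\beta)} = \beta^n n!\, g_n^{(\beta)}$ together with the classical content expansion (your normalization is right: the $\beta^n$ and the $\alpha^n$ in $\tilde{J}$ cancel exactly, as a check at $n=2$ confirms), then pass to the binomial basis by evaluating at $\alpha = N \in \mathbb{N}$, where the identity amounts to computing $n!\,s_\lambda(1^N)$ two ways --- the content formula versus Gessel's expansion with $F_\sigma(1^N) = \binom{N+n-1-|\sigma|}{n}$ --- and you finish with descent complementation under transposition plus destandardization. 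The paper never writes the Schur coefficient in closed form at this stage: it starts from the monomial coefficient $\frac{n!}{\lambda!}\prod_{s\in\lambda}(\alpha + arm(s))$, proves bijectively that this equals $n!\sum_k A(\lambda,k)\binom{\alpha+n-1-k}{n}$ in terms of $\lambda$-restricted Eulerian numbers, relates those to quasi-Yamanouchi tableaux and Kostka numbers via an RSK bijection (Lemma \ref{RSKlemma}), converts the monomial expansion into the Schur expansion by induction on dominance order, and only then applies Lemma \ref{QYTsymmetry}. Your version is shorter and leans on classical machinery combined with polynomiality in $\alpha$ (an argument the paper also uses inside its first lemma); indeed, the content-product form you derive appears in the paper only later, in Theorem \ref{rookboardtheorem}, obtained there via Yoo's Macdonald polynomial result rather than from Macdonald's book directly. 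What the paper's longer route buys is the intermediate combinatorics --- the $\lambda$-restricted Eulerian numbers and the RSK correspondence between restricted permutations and pairs $(P,Q)$ --- which are of independent interest and feed into its later treatment of the fundamental quasisymmetric expansions. Two points you should make explicit: the polynomials $\binom{\alpha+k}{n}$ for $0\leq k\leq n-1$ are linearly independent, so comparing coefficients determines $a_k((n),\lambda)$ uniquely; and the complementation $Des(T') = \{1,\ldots,n-1\}\setminus Des(T)$ under transposition, which you invoke, is (after destandardization) precisely Lemma \ref{QYTsymmetry}, as you note.
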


We split the proof into several parts, starting with the coefficient of $m_\lambda$ in $J_\mu^{(\alpha)}(X)$. By example 3 in chapter IV, section 10 of Macdonald \cite{Macdonald}, this is $\frac{n!}{\lambda!}\prod_{s\in \lambda} (arm(s)\alpha +1)$ where $\lambda! = \lambda_1!\lambda_2!\cdots$ and the arm of a cell $s$ in the diagram of $\lambda$ is the number of cells to the right of $s$. Converting to $\tilde{J}_\mu^{(\alpha)}(X)$, this becomes $\frac{n!}{\lambda!}\prod_{s\in \lambda} (\alpha + arm(s))$. The next step is to convert these coefficients to the new basis.

\begin{lemma}
Given a partition $\lambda$ of $n$,
$$\frac{n!}{\lambda!}\prod_{s\in \lambda} (\alpha + arm(s)) = n! \sum_{k=0}^{n-1} A(\lambda, k){\alpha + n - 1 - k \choose n}$$
where $A(\lambda,k)$ is the number of $\lambda$-restricted permutations with $k$ descents. 
\end{lemma}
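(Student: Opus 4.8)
The plan is to prove the identity
$$\frac{n!}{\lambda!}\prod_{s\in\lambda}(\alpha+\mathrm{arm}(s)) = n!\sum_{k=0}^{n-1}A(\lambda,k)\binom{\alpha+n-1-k}{n}$$
by recognizing the left-hand product as a rook/Ferrers board generating function and then identifying the associated hit numbers with $\lambda$-restricted Eulerian numbers. Dividing both sides by $n!$, I must show $\frac{1}{\lambda!}\prod_{s\in\lambda}(\alpha+\mathrm{arm}(s)) = \sum_{k}A(\lambda,k)\binom{\alpha+n-1-k}{n}$.

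First I would compute the multiset of arm lengths explicitly. In row $j$ of $\lambda$ (of length $\lambda_j$), the cells have arms $\lambda_j-1,\lambda_j-2,\ldots,1,0$ reading left to right, so the product $\prod_{s\in\lambda}(\alpha+\mathrm{arm}(s))$ factors as $\prod_j\prod_{a=0}^{\lambda_j-1}(\alpha+a)$. Dividing by $\lambda!=\prod_j\lambda_j!$, the left side becomes $\prod_j\frac{1}{\lambda_j!}\prod_{a=0}^{\lambda_j-1}(\alpha+a)=\prod_j\binom{\alpha+\lambda_j-1}{\lambda_j}$. So the target identity is
$$\prod_{j=1}^{\ell(\lambda)}\binom{\alpha+\lambda_j-1}{\lambda_j} = \sum_{k=0}^{n-1}A(\lambda,k)\binom{\alpha+n-1-k}{n}.$$

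Next I would set up the rook-theoretic interpretation using Proposition~\ref{rookboardprop}. The plan is to rewrite the left-hand product as $\prod_{i=1}^n(\alpha+c_i-i+1)$ for a suitable weakly increasing sequence $c_1\le\cdots\le c_n\le n$, so that the Goldman--Joichi--White formula expresses it as $\sum_k h_k(B)\binom{\alpha+k}{n}$ for the Ferrers board $B=B(c_1,\ldots,c_n)$. Writing out the factors of $\prod_j\binom{\alpha+\lambda_j-1}{\lambda_j}$, each block contributes linear factors $(\alpha+\lambda_j-1)(\alpha+\lambda_j-2)\cdots(\alpha)$; concatenating these blocks and matching them against $(\alpha+c_i-i+1)$ pins down the $c_i$ and hence the board $B$. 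Comparing $\binom{\alpha+k}{n}$ with the shifted binomial $\binom{\alpha+n-1-k}{n}$ appearing in the statement amounts to the reindexing $k\mapsto n-1-k$, which should convert hit numbers $h_k(B)$ into the claimed $A(\lambda,k)$ — equivalently, the factors must be arranged in decreasing rather than increasing order, giving the complementary board.

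The main obstacle, and the combinatorial heart of the lemma, will be proving that the hit numbers of this particular board equal the $\lambda$-restricted Eulerian numbers, i.e. $h_{n-1-k}(B)=A(\lambda,k)$ (up to the reindexing). For this I would give a bijection between full nonattacking rook placements on the $n\times n$ grid and $\lambda$-restricted permutations that sends the number of rooks landing off the board to the number of descents. Concretely, a $\lambda$-restricted permutation is determined by interleaving the $\ell(\lambda)$ increasing blocks of sizes $\lambda_1,\ldots,\lambda_\ell$, which is exactly the data encoded by the board built from the $\binom{\alpha+\lambda_j-1}{\lambda_j}$ factors; tracking how each successive insertion either continues a run or creates a descent should match the hit statistic. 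Verifying that the descent count corresponds precisely to the off-board rook count, with the correct shift, is the delicate step and is where I would spend the most care; the preliminary rook-board setup and the arm computation are routine by comparison.
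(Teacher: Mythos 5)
Your opening reduction is fine and matches the paper's first step: the arms in row $j$ are $\lambda_j-1,\ldots,1,0$, so after cancelling $n!$ the identity becomes $\prod_j \binom{\alpha+\lambda_j-1}{\lambda_j} = \sum_k A(\lambda,k)\binom{\alpha+n-1-k}{n}$. But your key claimed identity is false as stated, because you dropped the factor $\lambda!$ when passing to linear factors: $\prod_j \binom{\alpha+\lambda_j-1}{\lambda_j} = \frac{1}{\lambda!}\prod_j\prod_{a=0}^{\lambda_j-1}(\alpha+a)$, and Proposition \ref{rookboardprop} applies only to the monic product $\prod_j\prod_a(\alpha+a)$, not to the binomial product. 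Hence the hit-number identity you actually need is $h_{n-1-k}(B)=\lambda!\,A(\lambda,k)$, not $h_{n-1-k}(B)=A(\lambda,k)$. Concretely, for $\lambda=(2,1)$, $n=3$, your block construction gives factors $(\alpha+1)(\alpha)(\alpha)$, i.e.\ $c=(1,1,2)$, and direct enumeration over $S_3$ gives $h_1(B(1,1,2))=4$ while $A(\lambda,1)=2$ (the permutations $132$ and $312$); so the bijection you propose between full rook placements and $\lambda$-restricted permutations cannot exist --- you would need placements to correspond to pairs (restricted permutation, orderings within the blocks), or equivalently to multiset permutations counted with multiplicity $\lambda!$. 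A second, smaller confusion: the reindexing $\binom{\alpha+k}{n}\mapsto\binom{\alpha+n-1-k}{n}$ is pure relabelling of coefficients and has nothing to do with arranging factors in decreasing order or taking a complementary board; as it happens, your block-by-block decreasing arrangement does yield a valid sequence $0\le c_1\le\cdots\le c_n\le n$ (the $c_i$ are constant within each block and jump up by $\lambda_{j+1}$ between blocks, with maximum $n-1$), but you never verify this.

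Beyond the $\lambda!$ error, the step you yourself flag as the heart of the matter --- that descents correspond to off-board rooks with the correct shift --- is only gestured at, and it is exactly where all the content of the lemma lives; the setup you do carry out is just the Goldman--Joichi--White machinery the paper already supplies. The route is salvageable: the corrected statement $h_{n-1-k}(B_\lambda)=\lambda!\,A(\lambda,k)$ is true (it checks out in the example above) and is essentially the classical hit-number solution of the Simon Newcomb problem for Ferrers boards, so proving or citing that result would complete your plan. That would, however, be a genuinely different proof from the paper's: the authors do not invoke Proposition \ref{rookboardprop} here at all, but instead prove the binomial identity directly for every integer $\alpha\ge n$ by an explicit algorithm, interpreting each factor $\binom{\alpha+\lambda_i-1}{\lambda_i}$ as a choice of $\lambda_i$ dotted cells in a column of height $\alpha+\lambda_i-1$ and bijecting such dotted diagrams, via a row-by-row reading procedure, with pairs consisting of a $\lambda$-restricted permutation with $k$ descents and an $n$-subset of a single column of height $\alpha+n-1-k$; polynomiality in $\alpha$ then gives the identity in general.
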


\begin{proof}
Cancel $n!$ and rewrite the left hand side to get 
$$\prod_{i=1}^{\ell(\lambda)} {\alpha + \lambda_i -1\choose \lambda_i} =\sum_{k=0}^{n-1} A(\lambda, k){\alpha + n - 1 - k \choose n}.$$ 
We show that this equality holds with a bijection. Assume $\alpha \in \mathbb{N}$ and $\alpha \geq n$. On the left hand side we count diagrams where we take a rectangle of cells with $\alpha-1$ rows and $\ell(\lambda)$ columns, then adjoin the conjugate shape of $\lambda$ at the bottom. In the $i$th column of the diagram, we choose $\lambda_i$ many cells and mark them with dots. On the right hand side, we count pairs where, for some $k$, the first element is a $\lambda$-restricted permutation with $k$ descents and the second element is a column of cells of height $\alpha + n -1 -k$ with $n$ cells marked by dots.
Given a diagram counted by the left hand side, we apply the following algorithm to get a pair counted by the right hand side.

\begin{enumerate}
\item  Label the dots in the diagram so that the first column's dots read $1,\ldots, \lambda_1$ from top to bottom, the second column's dots read $\lambda_1 +1,\ldots,\lambda_2$ from top to bottom, etc. Extend the diagram downwards by adjoining cells to the bottom (without moving any dots) so that it becomes a rectangle of height $\alpha + n-1$ and width $\ell (\lambda)$. Set $i = 1$ and start with a pair where the first element is the empty word and the second element is a column of height $0$. 
\item Read across row $i$, where rows are counted starting from the top. If there is no dot, go to 3a. If there is a dot in this row, go to 3b. 
\item \begin{enumerate}
    \item Do nothing to the word in your pair and adjoin a blank cell to the bottom of your column. Go to step 4. 
    \item Adjoin the label of the first dot from the left in this row to the end of the word in your pair. If this is not a descent, add a new cell to the bottom of the column in your pair and mark it with a dot. If this is a descent, then do not add a new box to the bottom of the column, but do mark the bottom cell in the column (which will be blank if it is a descent) with a dot. Delete the dot that was hit and and push all dots that are not in the same column down by one row. Go to step 4.
    \end{enumerate}
\item If $i = \alpha + n-1$, then terminate the algorithm, else increment $i$ by one and go back to step $2$. 
\end{enumerate}
\begin{example} An example of the algorithm for $\lambda=(2,2,1)$.
\begin{center}
\begin{displaymath}
$$\tableau{
\ & \bullet & \ \\
\bullet & \bullet & \ \\
\ & \ & \bullet \\
\ & \ & \ \\
\bullet & \ & \ \\
\ & \ \\
}\ \ \ \raisebox{-43pt}{$\longmapsto$} \ \  \raisebox{-43pt}{$34152,$}\ \ \tableau{\bullet \\ \bullet \\ \bullet \\ \ \\ \bullet \\ \bullet \\ \ \\}$$ 
\end{displaymath}
\end{center}
\end{example}

The algorithm must terminate, because the loop always goes through step 4. To show that it is well-defined, we need to check that no dot can be pushed below row $\alpha +n -1$. If we consider a dot in column $i$ at the lowest possible position, row $\alpha+\lambda_i -1$, then it needs to be pushed down $n-\lambda_i +1$ times to leave the diagram. However, there are only $n-\lambda_i$ dots outside of this column that can contribute to pushing this dot down, so no dot can be pushed outside of the diagram. 

We can obtain an inverse by reversing the steps of the algorithm. In this direction, the marked column encodes the row that a dot comes from and the $\lambda$-restricted permutation encodes which column a dot comes from. By similar reasoning as above, we also have to end with all dots in column $i$ at or above row $\alpha+\lambda_i-1$. 

This algorithm gives a bijection that holds for any $\alpha \geq n$. Since both sides of the equality we are trying to prove are finite degree polynomials in $\alpha$, this is sufficient to prove equality.
\end{proof}

We now wish to relate these $\lambda$-restricted Eulerian numbers to quasi-Yamanouchi tableaux. We can achieve this using RSK. 
\begin{lemma} \label{RSKlemma}
Given a partition $\lambda$ of $n$, it holds that
$$\sum_{k=0}^{n-1} A(\lambda,k){\alpha +n -1-k \choose n} = \sum_{\substack{|\nu| = n \\ \nu \geq \lambda}} K_{\nu\lambda} \sum_{k=0}^{n-1} \textnormal{QYT}_{=k+1}(\nu){\alpha + n -1-k \choose n}.$$
\end{lemma}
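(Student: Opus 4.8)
The plan is to observe that both sides of the asserted identity are already written as linear combinations of the family $\left\{\binom{\alpha+n-1-k}{n}\right\}_{k=0}^{n-1}$, which is linearly independent. Hence it suffices to match coefficients of $\binom{\alpha+n-1-k}{n}$ on each side and prove the purely enumerative statement
$$A(\lambda,k) = \sum_{\substack{|\nu|=n \\ \nu \geq \lambda}} K_{\nu\lambda}\,\textnormal{QYT}_{=k+1}(\nu), \qquad 0 \le k \le n-1.$$
Everything then reduces to realizing $\lambda$-restricted permutations by exactly $k$ descents as a disjoint union over shapes $\nu$, with multiplicity $K_{\nu\lambda}$ and with the descent statistic recorded by a standard Young tableau.

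First I would replace $\lambda$-restricted permutations by words. Standardization gives a bijection between words of content $\lambda$ (that is, with $\lambda_j$ copies of the letter $j$) and $\lambda$-restricted permutations: reading left to right, assign labels $1,\dots,\lambda_1$ to the copies of $1$, labels $\lambda_1+1,\dots,\lambda_1+\lambda_2$ to the copies of $2$, and so on, producing precisely a permutation in which each block $\lambda_1+\cdots+\lambda_{j-1}+1,\dots,\lambda_1+\cdots+\lambda_j$ appears in increasing order. This map is descent-preserving: a strict ascent or descent between adjacent letters is preserved, and a pair of equal adjacent letters becomes an ascent (the labels increase left to right), which is correct since equal letters form no descent in the word. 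Thus $A(\lambda,k)$ equals the number of words of content $\lambda$ with exactly $k$ descents.

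Next I would apply RSK insertion to such words. For words, RSK is a bijection onto pairs $(P,Q)$ of equal shape $\nu\vdash n$, where $P$ is a semistandard tableau of content $\lambda$ and $Q$ is a standard tableau; the number of admissible $P$ for a fixed shape $\nu$ is $K_{\nu\lambda}$, and the choices of $P$ and $Q$ are independent, so the fiber over each $Q$ has size $K_{\nu\lambda}$. Since $K_{\nu\lambda}\neq 0$ exactly when $\nu \geq \lambda$, the sum naturally restricts to $\nu \geq \lambda$. The essential input is the classical descent property of RSK: the descent set of the word $w$ coincides with the descent set of its recording tableau $Q(w)$. Grouping words of content $\lambda$ by the shape $\nu$ and the recording tableau $Q$ then gives that the number of such words with $k$ descents is $\sum_{\nu\geq\lambda} K_{\nu\lambda}\cdot\#\{Q\in\textnormal{SYT}(\nu): \textnormal{des}(Q)=k\}$. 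Finally, by the destandardization bijection discussed above, the image of $\textnormal{QYT}_{=k+1}(\nu)$ is exactly the set of standard Young tableaux of shape $\nu$ with $k+1$ runs, i.e.\ with $k$ descents, so $\#\{Q\in\textnormal{SYT}(\nu):\textnormal{des}(Q)=k\} = \textnormal{QYT}_{=k+1}(\nu)$, completing the coefficient identity.

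The main obstacle I anticipate is bookkeeping of descent conventions rather than any deep difficulty: one must check that the descent of a word at position $i$ (meaning $w_i > w_{i+1}$), the descent statistic on the recording tableau $Q$, and the paper's tableau descent convention (namely $i+1$ lying weakly left of $i$, equivalently $i+1$ beginning a new run) all agree under the specific insertion and reading-word conventions fixed in the Preliminaries. I would verify this compatibility explicitly, and note as a sanity check the smallest nontrivial case $\lambda=(2,1)$, where the three $\lambda$-restricted permutations $123,132,312$ give $A((2,1),0)=1$ and $A((2,1),1)=2$, matching $K_{(3),(2,1)}\,\textnormal{QYT}_{=1}((3)) = 1$ and $K_{(2,1),(2,1)}\,\textnormal{QYT}_{=2}((2,1)) = 2$.
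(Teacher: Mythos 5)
Your proposal is correct and takes essentially the same route as the paper: both reduce to the coefficient identity $A(\lambda,k)=\sum_{\nu\geq\lambda}K_{\nu\lambda}\,\textnormal{QYT}_{=k+1}(\nu)$, pass between $\lambda$-restricted permutations and content-$\lambda$ words, apply RSK, and translate descent-counted standard tableaux into quasi-Yamanouchi tableaux via destandardization. The only cosmetic difference is that you record descents in the standard recording tableau via the classical fact $\textnormal{Des}(w)=\textnormal{Des}(Q(w))$, whereas the paper applies RSK to the lexicographically reordered inverse biword and tracks descents in the insertion tableau $P$ by analyzing bump paths --- the same bijection up to RSK symmetry.
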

\begin{proof}
By comparing coefficients of ${\alpha + n -1 -k\choose n}$, it is sufficient to show that for a fixed $k$,
$$A(\lambda, k) = \sum_{\substack{|\nu|=n \\ \nu \geq \lambda}}K_{\nu\lambda}\textnormal{QYT}_{=k+1}(\nu).$$
We prove this through a bijection between $\lambda$-restricted permutations with $k$ descents and pairs of tableaux $(P,Q)$ of the same shape, where $P$ is a standard Young tableau with $k+1$ runs and $Q$ is a semistandard Young tableaux with weight $\lambda$. 

Given a $\lambda$-restricted permutation $\sigma$ with $k$ descents, obtain $\sigma'$ by decrementing all integers $1+\sum_{i=0}^j \lambda_i,\ldots, \sum_{i=0}^{j+1} \lambda_i$ to $j+1$ where $\lambda_0 = 0$ for all $0\leq j < \ell(\lambda)$. Create a two line array with $\sigma'$ in the top row and the integers $1,\ldots,n$ in order on the bottom, then reorder this array so that the columns have pairs in lexicographic order. Map this array via RSK to a pair $(P,Q)$, where $P$ is a standard Young tableau and $Q$ is a semistandard Young tableau with weight $\lambda$. We want to show that $P$ has $k+1$ runs. 

If $\sigma_i < \sigma_{i+1}$, then $i+1$ is inserted after $i$ in $P$, and RSK will keep $i$ and $i+1$ in the same run of $P$. If $\sigma_i > \sigma_{i+1}$, then $i+1$ must be inserted before $i$. In this case, RSK will force $i+1$ to stay weakly left of $i$. Thus, descents in $\sigma$ correspond to descents in $P$, and $P$ has $des(P)+1 = k+1$ runs. This shows that RSK maps the two line arrays defined by $\lambda$-restricted permutations to the desired set of pairs $(P,Q)$. It remains to show that the inverse map has image contained in the $\lambda$-restricted permutations.

Take some pair of tableaux $(P,Q)$ where $P$ is standard with $k+1$ runs and $Q$ is semistandard with weight $\lambda$. The inverse map will give a two line array that, when rearranged to give $1,\ldots, n$ on the bottom row, will give a descent in the top row between columns $i$ and $i+1$ exactly when $i+1$ starts a new run in $P$. The top row will also have weight $\lambda$, since $Q$ has weight $\lambda$. The decrementing process described above on $\lambda$-restricted permutations has a natural inverse, so we reverse that process and end up with a $\lambda$-restricted permutation with $k$ descents in the top row of the array as desired. Since RSK is a bijection, we know that both directions are injective, so the proof is complete.

\end{proof}

\begin{corollary}
It holds that
$$\sum_{|\lambda| = n} \sum_{k=0}^{n-1} A(\lambda, k){\alpha +n-1-k\choose n} m_\lambda = \sum_{|\nu|=n}\sum_{k=0}^{n-1} \textnormal{QYT}_{=k+1}(\nu){\alpha+n-1-k\choose n} s_\nu.$$
\end{corollary}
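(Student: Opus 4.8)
The plan is to obtain this corollary directly from Lemma~\ref{RSKlemma} by multiplying through by $m_\lambda$ and summing over all partitions of $n$. Lemma~\ref{RSKlemma} gives, for each fixed $\lambda$, the identity
$$\sum_{k=0}^{n-1} A(\lambda,k){\alpha+n-1-k \choose n} = \sum_{\substack{|\nu|=n\\ \nu\geq\lambda}} K_{\nu\lambda}\sum_{k=0}^{n-1}\textnormal{QYT}_{=k+1}(\nu){\alpha+n-1-k \choose n}.$$
I would multiply both sides by $m_\lambda$ and sum over all $|\lambda|=n$. The left-hand side then coincides term-by-term with the left-hand side of the claimed corollary, so the entire argument reduces to rewriting the right-hand side into the stated Schur-indexed form.

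For the right-hand side, the key move is to interchange the order of summation so that $\nu$ is summed first and $\lambda$ second. At this point the dominance restriction $\nu\geq\lambda$ may be dropped, since $K_{\nu\lambda}=0$ whenever $\nu\not\geq\lambda$; extending the inner sum to all partitions $\lambda$ of $n$ therefore changes nothing. After reindexing, the expression becomes
$$\sum_{|\nu|=n}\sum_{k=0}^{n-1}\textnormal{QYT}_{=k+1}(\nu){\alpha+n-1-k \choose n}\left(\sum_{|\lambda|=n}K_{\nu\lambda}\,m_\lambda\right).$$

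The final step is to recognize the parenthesized inner sum as the monomial expansion of the Schur function, $s_\nu=\sum_{|\lambda|=n}K_{\nu\lambda}\,m_\lambda$, which is just the defining property of the Kostka numbers. Substituting $s_\nu$ yields exactly the right-hand side of the corollary. Since the whole argument is a reindexing followed by a standard identity, there is no genuine obstacle; the only points needing a moment's care are justifying the interchange of the finite sums and observing that the vanishing of $K_{\nu\lambda}$ off the dominance order is precisely what lets both sides be written as unrestricted sums over all partitions of $n$.
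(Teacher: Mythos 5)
Your proof is correct, and it takes a genuinely more direct route than the paper's. You sum Lemma~\ref{RSKlemma} against $m_\lambda$ over all $|\lambda|=n$, interchange the finite sums, drop the dominance restriction because $K_{\nu\lambda}=0$ whenever $\nu\not\geq\lambda$, and invoke the Kostka expansion $s_\nu=\sum_{|\lambda|=n}K_{\nu\lambda}m_\lambda$; this is a complete argument with no gaps. The paper instead proceeds by induction on the dominance order: it treats the coefficient of $s_\lambda$ as an unknown $C_{\lambda,k+1}$, verifies the base case $\lambda=(n)$ by hand (using $A((n),k)=\delta_{k,0}$ and $\textnormal{QYT}_{=1}((n))=1$), and then, assuming the Schur coefficients are as claimed for all $\nu>\lambda$, extracts the coefficient of $m_\lambda$ from both sides and applies Lemma~\ref{RSKlemma} to solve for $C_{\lambda,k+1}=\textnormal{QYT}_{=k+1}(\lambda)$. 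In effect the paper inverts the unitriangular Kostka matrix one dominance step at a time, whereas you apply it in the forward direction; your version is shorter, needs no base case or induction, and makes transparent that the corollary is simply Lemma~\ref{RSKlemma} repackaged as a basis-free identity. The paper's inductive framing buys only a little: it displays explicitly how each Schur coefficient is forced by triangularity, but it establishes nothing beyond what your direct computation already gives.
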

\begin{proof}
We proceed by induction on the poset of partitions induced by the dominance order. For a given partition $\lambda$, our inductive hypothesis is that the coefficient of $s_\nu$ matches the claim for all $\nu > \lambda$. From this, we show that the coefficient of $s_\lambda$ is correct as well. 

First we need the base case. $A((n),k)=1$ when $k=0$ and is zero otherwise. Therefore, the coefficient of $m_n$ on the left hand side is ${\alpha +n-1 \choose n}$. On the right hand side, we can only look to the expansion of $s_n$ to get an $m_n$ term, so it is clear that the coefficient of $s_n$ on this side must also be ${\alpha+n-1\choose n}$. By definition of quasi-Yamanouchi tableaux, QYT$_{=1}((n)) = 1$, which confirms the base case.

Now let $\lambda$ be an arbitrary partition of size $n$ and assume the inductive hypothesis. The expansion of Schur functions into monomial symmetric functions forces the coefficient of $m_\lambda$ on either side to be
$$\sum_{k=0}^{n-1} A(\lambda, k) {\alpha +n-1-k\choose n} = \sum_{k=0}^{n-1} C_{\lambda,k+1}{\alpha +n-1-k\choose n} + \sum_{\substack{|\nu|=n\\ \nu> \lambda}} K_{\nu\lambda} \sum_{k=0}^{n-1} \textnormal{QYT}_{=k+1}(\nu){\alpha+n-1-k\choose n},$$
where $\sum_{k=0}^{n-1} C_{\lambda,k+1}{\alpha +n-1-k\choose n}$ is the coefficient of $s_\lambda$ and the other sum comes from each $s_\nu$ for $\nu > \lambda$. Applying Lemma \ref{RSKlemma} immediately proves that $C_{\lambda,k+1} = \textnormal{QYT}_{=k+1}(\lambda)$, completing the inductive argument.
\end{proof}

Linking these together and applying Lemma \ref{QYTsymmetry} completes the proof of Theorem \ref{QYTtheorem}, which shows that the Schur expansion of $\tilde{J}_{(n)}^{(\alpha)}(X)$ is in fact a generating function for quasi-Yamanouchi tableaux up to a constant of $n!$, thus proving Conjecture 1 for the case of $\mu = (n)$. Chen, Yang, and Zhang \cite{CYZ16} adapted a result by Brenti \cite{Br89} to show that the polynomial
$$\sum_{T\in \textnormal{SYT}(\lambda)} t^{des(T)}$$
has only real zeroes. Using this and the definition of quasi-Yamanouchi tableaux, Theorem \ref{QYTtheorem} also proves the $\mu = (n)$ case of the second part of Conjecture 1.

\subsection{Fundamental quasisymmetric expansions}
One last application of Theorem \ref{QYTtheorem} takes us into a brief digression towards the fundamental quasisymmetric expansion. In the ${\alpha + k \choose n}$ basis, we can obtain the fundamental quasisymmetric expansion of $\tilde{J}_{(n)}^{(\alpha)}(X)$ as a corollary of the following result.

\begin{theorem}\label{QYTqsymexp} It holds that
$$\sum_{\pi \in S_n} t^{des(\pi)} F_{Des(P(\pi))}(x)= \sum_{|\mu|=n} \sum_{k=0}^{n-1}\textnormal{QYT}_{=k+1}(\mu)t^k s_\mu$$
where $P(\pi)$ is the insertion tableau of $\pi$ given by RSK.
\end{theorem}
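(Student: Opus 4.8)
The plan is to evaluate the left-hand side by pushing everything through the RSK correspondence. Write $\pi \mapsto (P(\pi), Q(\pi))$ for the RSK map, so that $P(\pi)$ and $Q(\pi)$ are standard Young tableaux of a common shape. Since RSK is a bijection between $S_n$ and pairs $(P,Q)$ of standard Young tableaux of equal shape, I would reindex the sum over $\pi \in S_n$ as a sum over such pairs, grouped by their common shape $\mu$. The only additional input needed is the classical fact that RSK carries the descent set of a permutation to the descent set of its recording tableau, i.e.\ $Des(\pi) = Des(Q(\pi))$; in particular $des(\pi) = des(Q(\pi))$.

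With these two facts in hand, the left-hand side becomes
$$\sum_{\pi \in S_n} t^{des(\pi)} F_{Des(P(\pi))}(x) = \sum_{|\mu|=n} \sum_{P, Q \in \textnormal{SYT}(\mu)} t^{des(Q)} F_{Des(P)}(x).$$
Because the summand factors as a function of $P$ times a function of $Q$, and $P$ and $Q$ range independently over $\textnormal{SYT}(\mu)$, the double sum splits as
$$\sum_{|\mu|=n} \left(\sum_{Q \in \textnormal{SYT}(\mu)} t^{des(Q)}\right)\left(\sum_{P \in \textnormal{SYT}(\mu)} F_{Des(P)}(x)\right).$$

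Next I would identify the two inner factors. The $P$-factor is precisely Gessel's fundamental quasisymmetric expansion of the Schur function recalled in the dual equivalence subsection, so $\sum_{P \in \textnormal{SYT}(\mu)} F_{Des(P)}(x) = s_\mu$. For the $Q$-factor, I would invoke the destandardization bijection between $\textnormal{QYT}_{\leq n}(\mu)$ and $\textnormal{SYT}(\mu)$: under this bijection the image of $\textnormal{QYT}_{=k+1}(\mu)$ is exactly the set of standard Young tableaux of shape $\mu$ with $k+1$ runs, equivalently those with exactly $k$ descents. Hence $\sum_{Q \in \textnormal{SYT}(\mu)} t^{des(Q)} = \sum_{k=0}^{n-1} \textnormal{QYT}_{=k+1}(\mu)\, t^k$. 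Substituting both evaluations yields the right-hand side and completes the argument.

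I do not expect a serious obstacle: the statement is essentially an assembly of three known ingredients, namely the RSK recording-tableau descent property, Gessel's expansion, and the run/descent correspondence for quasi-Yamanouchi tableaux established earlier in this section. The one point requiring genuine care is matching conventions. I must check that the tableau descent set $Des(Q(\pi))$, under the paper's ``weakly left'' definition, agrees with the permutation descent set $Des(\pi)$, and that the ``number of runs equals number of descents plus one'' bookkeeping lines up so that the power of $t$ and the index $k+1$ on the quasi-Yamanouchi count are correctly aligned.
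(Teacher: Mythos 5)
Your proposal is correct, and it takes a genuinely shorter route than the paper. The paper proves this theorem with Assaf's dual equivalence machinery: it builds the graph $G$ on $S_n$ with elementary dual equivalence edges, argues via bump paths that RSK respects dual equivalence on the insertion tableaux, decomposes the image graph into components isomorphic to $G_\mu$ (each occurring $|\textnormal{SYT}(\mu)|$ times), observes that the descent set --- hence the run count of the recording tableau --- is constant on each component, and then counts components with $k$ runs using $\textnormal{QYT}_{=k}(\mu)$ before applying the Schur expansion one dual equivalence class at a time. You instead reindex the sum directly through the RSK bijection onto same-shape pairs $(P,Q)$, invoke the classical fact (due to Sch\"utzenberger) that $Des(\pi)=Des(Q(\pi))$, and factor the resulting double sum into $\bigl(\sum_Q t^{des(Q)}\bigr)\bigl(\sum_P F_{Des(P)}(x)\bigr)$, finishing with Gessel's expansion and the destandardization refinement $\textnormal{QYT}_{=k+1}(\mu)\leftrightarrow\{T\in\textnormal{SYT}(\mu): des(T)=k\}$. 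Note that the paper's proof also quietly relies on the recording-tableau descent property (it states that $des(\pi)$ equals the number of runs of $Q$ minus one), so your argument can be viewed as stripping away the dual equivalence scaffolding that the factorization renders unnecessary; what that scaffolding buys the authors is a method they explicitly hope generalizes to a Schur positivity proof for arbitrary $\mu$, where a clean product factorization like yours is unlikely to be available, while your argument buys brevity and transparency. Your flagged convention checks do go through: with the paper's French-notation ``weakly left'' definition, $i\in Des(Q)$ exactly when $i+1$ sits in a strictly higher row, which matches the permutation descent $\pi_i>\pi_{i+1}$ under RSK, and the bookkeeping ($m$ runs equals $m-1$ descents, so the $t^k$ coefficient is $\textnormal{QYT}_{=k+1}(\mu)$) aligns with the statement.
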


\begin{proof}
Connect all $\pi \in S_n$ with colored edges corresponding to elementary dual equivalence involutions to get a graph $G$. By looking at properties of the bump paths in RSK, we can see that RSK respects dual equivalence relations in the $P$ insertion tableaux, so applying RSK to every vertex to get $G'$ maintains the edge relations between the $P$ tableaux.
For $\mu$ a partition, let $G_\mu$ be the dual equivalence graph on $\textnormal{SYT}(\mu)$. Each connected component of $G'$ will be isomorphic to $G_\mu$ for some $|\mu|=n$, and furthermore, there will be exactly $|\textnormal{SYT}(\mu)|$ copies of $G_\mu$ contained in $G'$ for each $\mu$.

Dual equivalence relations do not change the descent set of a permutation, and the number of descents of a permutation is equal to $\#\textnormal{runs}- 1$ of its $Q$ recording tableau. Therefore, since the descent set is constant on the vertices of a connected component of $G$, the number of runs of each corresponding $Q$ tableau is also constant. Among pairs $(P,Q)$ of shape $\mu$, $Q$ ranges over all $\textnormal{SYT}(\mu)$ with $|\textnormal{SYT}(\mu)|$ of each appearing, so a counting argument tells us there must be exactly $\textnormal{QYT}_{=k}(\mu)$ many connected components isomorphic to $G_\mu$ which have $k$ runs in each of the $Q$ tableaux of its vertices. Then taking the sum
$$\sum_{\pi \in S_n} t^{des(\pi)}F_{Des(P(\pi))}(x) = \sum_{(P,Q) \in G'} F_{Des(P)}(x)t^{des(Q)}$$
and applying the expansion of Schur functions into the fundamental quasisymmetric basis to the right hand side completes the proof.
\end{proof}

\begin{corollary} \label{jacksqsymmexp}
It holds that
$$\tilde{J}_{(n)}^{(\alpha)}(X) = n!\sum_{\pi \in S_n} {\alpha +n-1-des(\pi)\choose n} F_{Des(P(\pi))},$$
where $P(\pi)$ is the insertion tableau of $\pi$ given by RSK.
\end{corollary}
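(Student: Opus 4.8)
The plan is to deduce the corollary from Theorem \ref{QYTqsymexp} by specializing the formal variable $t$. I would read the identity in Theorem \ref{QYTqsymexp} as an equality of polynomials in $t$ whose coefficients are quasisymmetric functions (note that the Schur functions on the right are in particular quasisymmetric). Since an identity of polynomials $f(t) = g(t)$ is preserved by any linear functional applied in the variable $t$, I would introduce the $\mathbb{Q}[\alpha]$-valued linear map $L$ determined coefficientwise by $L(c\,t^k) = c\cdot n!\binom{\alpha + n - 1 - k}{n}$, and apply it to both sides of Theorem \ref{QYTqsymexp}. Because the two sides are literally the same element in the polynomial ring, applying $L$ yields equal images, and it only remains to compute each image.

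On the left-hand side, applying $L$ term by term to $\sum_{\pi\in S_n} t^{\,des(\pi)}F_{Des(P(\pi))}(x)$ replaces each $t^{\,des(\pi)}$ by $n!\binom{\alpha+n-1-des(\pi)}{n}$, producing exactly the right-hand side of the corollary. On the right-hand side, $L$ sends $\sum_{|\mu|=n}\sum_{k=0}^{n-1}\textnormal{QYT}_{=k+1}(\mu)\,t^k s_\mu$ to $n!\sum_{|\mu|=n}\sum_{k=0}^{n-1}\textnormal{QYT}_{=k+1}(\mu)\binom{\alpha+n-1-k}{n}s_\mu$. The task is then to recognize this as $\tilde{J}_{(n)}^{(\alpha)}(X)$.

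To do so I would, for each fixed $\mu$, rewrite the coefficient of $s_\mu$ using the symmetry $\textnormal{QYT}_{=k+1}(\mu)\cong\textnormal{QYT}_{=(n-k)}(\mu')$ supplied by Lemma \ref{QYTsymmetry}, and then substitute $m = n-1-k$. This converts the coefficient of $s_\mu$ into $n!\sum_{m=0}^{n-1}\textnormal{QYT}_{=m+1}(\mu')\binom{\alpha+m}{n}$, which is precisely $\langle\tilde{J}_{(n)}^{(\alpha)}(X), s_\mu\rangle$ by Theorem \ref{QYTtheorem}. Summing over $\mu$ identifies the $L$-image of the right-hand side with $\tilde{J}_{(n)}^{(\alpha)}(X)$, completing the proof. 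Since every step rests on results already established, I do not anticipate a real obstacle; the only points requiring care are the (routine) justification that $L$ may be applied to the polynomial identity of Theorem \ref{QYTqsymexp}, and the correct alignment of indices so that the conjugation $\mu\mapsto\mu'$ and the reflection $k\mapsto n-1-k$ coming from Lemma \ref{QYTsymmetry} exactly convert $\binom{\alpha+n-1-k}{n}$ into the $\binom{\alpha+m}{n}$ appearing in Theorem \ref{QYTtheorem}.
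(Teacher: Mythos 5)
Your proposal is correct and follows exactly the paper's proof, which is the one-line instruction ``Apply Theorems \ref{QYTtheorem} and \ref{QYTqsymexp} and Lemma \ref{QYTsymmetry}''; you have simply spelled out the details, and your index bookkeeping (using $\textnormal{QYT}_{=k+1}(\mu)\cong\textnormal{QYT}_{=n-k}(\mu')$ and the substitution $m=n-1-k$ to match $\binom{\alpha+m}{n}$ against Theorem \ref{QYTtheorem}) is accurate.
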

\begin{proof}
Apply Theorems \ref{QYTtheorem} and \ref{QYTqsymexp} and Lemma \ref{QYTsymmetry}.
\end{proof}

This result prompted the following conjecture on the quasisymmetric expansion for general partitions $\mu$. 

\begin{conjecture}
For a partition $\mu$ of size $n$, it holds that
$$\tilde{J}_\mu^{(\alpha)}(X) = \sum_{\pi,\tau \in S_n} {\alpha + n -1 - des(\pi) \choose n} F_{\sigma(\pi,\tau,\mu)}(x)$$
for some set-valued function $\sigma$ depending on $\pi, \tau,$ and $\mu$ and with image in $\{1,\ldots,n-1\}$.
\end{conjecture}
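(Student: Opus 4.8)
The plan is to deduce the conjecture from the Schur expansion together with Gessel's fundamental expansion, reducing it to a single combinatorial counting statement, and then to construct the function $\sigma$ realizing that count by the same RSK and dual-equivalence machinery used for Theorem \ref{QYTqsymexp} and Corollary \ref{jacksqsymmexp}. Reindexing Conjecture \ref{C1} by $k=n-1-j$ gives $\langle \tilde J_\mu^{(\alpha)},s_\lambda\rangle=\sum_{j=0}^{n-1}a_{n-1-j}(\mu,\lambda)\binom{\alpha+n-1-j}{n}$, and substituting Gessel's expansion $s_\lambda=\sum_{S}d_\lambda(S)F_S$, where $d_\lambda(S)=\#\{T\in\textnormal{SYT}(\lambda):Des(T)=S\}$, turns $\tilde J_\mu^{(\alpha)}$ into an explicit sum of binomials times $F_S$'s. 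Comparing this with the proposed right-hand side term by term, using the linear independence of the binomials $\binom{\alpha+n-1-j}{n}$ and the fact that the $F_S$ are a basis, shows that the conjecture is \emph{equivalent} to the identity
\[ \#\{(\pi,\tau)\in S_n\times S_n:\sigma(\pi,\tau,\mu)=S,\ des(\pi)=j\}=\sum_{|\lambda|=n}a_{n-1-j}(\mu,\lambda)\,d_\lambda(S) \]
holding for every $S\subseteq\{1,\dots,n-1\}$ and every $0\le j\le n-1$; symmetry of the proposed right-hand side is then automatic, since the sum collapses back to $\sum_\lambda\langle\tilde J_\mu^{(\alpha)},s_\lambda\rangle s_\lambda$.

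To build $\sigma$ I would generalize the $\mu=(n)$ case, where $\sigma(\pi,\tau,(n))=Des(P(\pi))$ ignores $\tau$. There RSK supplies $des(\pi)=des(Q(\pi))$ and lets $P(\pi)$ range over all tableaux with descent set $S$, so summing over the free $\tau$ yields the factor $n!$ and, after Lemma \ref{QYTsymmetry}, reproduces $a_{n-1-j}((n),\lambda)=n!\,\textnormal{QYT}_{=j+1}(\lambda)$; the counting identity above then reduces to a tautology. For general $\mu$ the permutation $\pi$ should again contribute $des(\pi)=j$, hence the binomial weight, together with an insertion tableau furnishing the descent set $S$, while $\tau$ must now genuinely encode the dependence on $\mu$ and supply the correct multiplicities $a_{n-1-j}(\mu,\lambda)$ in place of the uniform factor $n!$. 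Assaf's dual equivalence graphs \cite{As15a} would again be the tool to organize the pairs $(\pi,\tau)$ so that each connected component contributes a single Schur function, exactly as the passage to the graph $G'$ did in Theorem \ref{QYTqsymexp}.

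The main obstacle is that specifying $\sigma$ is tantamount to giving a combinatorial interpretation of $a_k(\mu,\lambda)$ for arbitrary $\mu$, which is precisely the open problem highlighted in the introduction; indeed even part one of Conjecture \ref{C1}, the nonnegativity of these coefficients, is presupposed and is itself unproven in general. The $\mu=(n)$ argument works only because the coefficient of $m_\lambda$ in $J_{(n)}^{(\alpha)}$ has the product form $\tfrac{n!}{\lambda!}\prod_{s\in\lambda}(\alpha+arm(s))$, which collapses to $\lambda$-restricted Eulerian numbers and then, through Lemma \ref{RSKlemma}, to quasi-Yamanouchi tableaux; no comparable product formula survives for general $\mu$. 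The realistic line of attack is therefore to begin from the Knop--Sahi monomial expansion \cite{KnSa97} (or the Haglund--Haiman--Loehr formula \cite{HHL05b} in the $q\to1$, $t=q^\alpha$ limit), convert each monomial coefficient into the $\binom{\alpha+k}{n}$ basis via Proposition \ref{rookboardprop}, and then run a dominance-order induction in the spirit of the corollary to Lemma \ref{RSKlemma}, extracting the $s_\lambda$-coefficient from the previously determined $s_\nu$-coefficients for $\nu>\lambda$. Establishing the resulting refined count, and hence the definition of $\sigma$, for even a single new family such as two-column $\mu$, where Yoo's results \cite{Yoo12,Yoo15} apply, would be the natural first milestone.
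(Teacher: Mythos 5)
This statement is a conjecture (Conjecture 5), and the paper contains no proof of it: it establishes only the cases $\mu=(n)$ (Corollary \ref{jacksqsymmexp}) and $\mu=(1^n)$ (Proposition \ref{eulerianstirlingprop}), and remarks that if one assumes the Schur positivity of Conjecture \ref{C1}, then Corollary \ref{eulerianstirlingcor1} together with Gessel's expansion shows the statement holds for \emph{some} $\sigma$, without identifying it. Your proposal, correctly, does not claim a proof either, and its substance matches the paper's own reasoning: your counting identity
\[
\#\{(\pi,\tau)\in S_n\times S_n:\sigma(\pi,\tau,\mu)=S,\ des(\pi)=j\}=\sum_{|\lambda|=n}a_{n-1-j}(\mu,\lambda)\,d_\lambda(S)
\]
is the precise form of the paper's conditional existence argument, since summing over $S$ gives $\sum_\lambda a_{n-1-j}(\mu,\lambda)K_{\lambda(1^n)}=n!\,A(n,n-1-j)=n!\,A(n,j)$, the number of pairs with $des(\pi)=j$, so nonnegativity of the $a_k(\mu,\lambda)$ is indeed the only missing ingredient for an abstract $\sigma$ to exist. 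Your diagnosis of the obstruction is also exactly the paper's: specifying $\sigma$ canonically amounts to a combinatorial interpretation of the $a_k(\mu,\lambda)$, which is open beyond $\mu=(n)$, where the product formula $\frac{n!}{\lambda!}\prod_{s\in\lambda}(\alpha+arm(s))$ is what makes Lemma \ref{RSKlemma} and the dual-equivalence argument of Theorem \ref{QYTqsymexp} go through. One caution: phrase the reduction as showing the conjecture is equivalent to the displayed counting identity \emph{for some choice of} $\sigma$, and hence holds conditionally on Conjecture \ref{C1}; your proposed program (Knop--Sahi expansion, dominance-order induction, two-column $\mu$ via Yoo) is a reasonable framing of what remains but is not carried out here, nor in the paper.
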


Corollary \ref{jacksqsymmexp} proves this conjecture in the case of $\mu=(n)$, where $\sigma(\pi,\tau,(n)) = Des(P(\pi))$, and Proposition \ref{eulerianstirlingprop} proves it in the case of $\mu = (1^n)$, where $\sigma(\pi,\tau,(1^n)) = \{1,\ldots,n-1\}$ for all $\pi,\tau \in S_n$. Furthermore, if we momentarily assume that the Jack polynomials are Schur positive in this basis, Corollary \ref{eulerianstirlingcor1} along with the expansion of Schurs into fundamental quasisymmetrics shows that this conjecture is true in general for some $\sigma$, although it does not tell us what $\sigma$ should be.
Finally, while the fundamental quasisymmetric expansion would be interesting in its own right, it may also lead to a proof of Schur positivity by a generalization of the method used in Theorem \ref{QYTqsymexp}. Corollary \ref{jacksqsymmexp} and Conjecture 5 have the following analogous conjectures in the ${\alpha \choose k} k!$ basis, where $B_n$ is the set of set partitions of $\{1,\ldots,n\}$.

\begin{conjecture}\label{conjbasis2}
For a partition $\mu$ of size $n$, it holds that
$$\tilde{J}_\mu^{(\alpha)}(X) = \sum_{\substack{\pi \in S_n \\ \beta \in B_n}} {\alpha \choose |\beta|}|\beta|! F_{\rho(\pi,\beta,\mu)}(x)$$
for some set-valued function $\rho$ depending on $\pi, \beta$, and $\mu$ with image in $\{1,\ldots,n-1\}$.
\end{conjecture}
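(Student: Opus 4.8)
The plan is to settle, in the $\binom{\alpha}{k}k!$ basis, exactly the three pieces the excerpt establishes for the sibling Conjecture \ref{conjbasis2}'s companion: handle the extreme shapes $\mu=(1^n)$ and $\mu=(n)$ explicitly, and then prove the general statement conditionally on the Schur positivity asserted in Conjecture \ref{C2}. First I would dispose of $\mu=(1^n)$. Since $(1^n)$ is the minimum in dominance order, $J_{(1^n)}^{(\alpha)}$ contains only the $m_{1^n}$ term, and the normalization forces $J_{(1^n)}^{(\alpha)}=n!\,e_n$, whence $\tilde{J}_{(1^n)}^{(\alpha)}(X)=n!\,\alpha^n e_n$. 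Because $e_n=s_{(1^n)}=F_{\{1,\dots,n-1\}}$ (the single column SYT has full descent set) and $\alpha^n=\sum_{\beta\in B_n}\binom{\alpha}{|\beta|}|\beta|!$ (the defining identity for Stirling numbers of the second kind), distributing the factor $n!=\sum_{\pi\in S_n}1$ over $S_n$ gives $\tilde{J}_{(1^n)}^{(\alpha)}(X)=\sum_{\pi\in S_n,\,\beta\in B_n}\binom{\alpha}{|\beta|}|\beta|!\,F_{\{1,\dots,n-1\}}$, so $\rho(\pi,\beta,(1^n))=\{1,\dots,n-1\}$.

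Next I would treat $\mu=(n)$ by pushing Corollary \ref{jacksqsymmexp} into the second basis. Writing $m=n-1-des(\pi)$, Proposition \ref{rookboardprop} applied to the rectangular Ferrers board $B_m$ with all columns of height $m$ gives $n!\binom{\alpha+m}{n}=\prod_{i=1}^n(\alpha+m-i+1)=\sum_k r_{n-k}(B_m)\binom{\alpha}{k}k!$, so that
\[
\tilde{J}_{(n)}^{(\alpha)}(X)=\sum_{\pi\in S_n}\sum_{k=1}^n r_{n-k}\bigl(B_{n-1-des(\pi)}\bigr)\binom{\alpha}{k}k!\,F_{Des(P(\pi))}.
\]
To recast this in the conjectured form I would exhibit, for each block count $k$, a bijection between the pairs $(\pi,\beta)$ with $|\beta|=k$ and the pairs $(\sigma,R)$ where $R$ places $n-k$ non-attacking rooks on $B_{n-1-des(\sigma)}$; both families have cardinality $n!\,S(n,k)$, the latter because extracting the coefficient of $m_{1^n}$ in the display (every $F_\tau$ contributes $1$) together with Proposition \ref{eulerianstirlingprop} forces $\sum_{\sigma}r_{n-k}(B_{n-1-des(\sigma)})=n!\,S(n,k)$. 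Setting $\rho(\pi,\beta,(n))=Des(P(\sigma))$ for $\sigma$ the first coordinate of the image pair then reproduces the display term by term.

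For general $\mu$ I would argue conditionally. Granting the Schur positivity of Conjecture \ref{C2}, I expand $\tilde{J}_\mu^{(\alpha)}(X)=\sum_\lambda\sum_k b_{n-k}(\mu,\lambda)\binom{\alpha}{k}k!\,s_\lambda$ and substitute Gessel's expansion $s_\lambda=\sum_{T\in\textnormal{SYT}(\lambda)}F_{Des(T)}$. For each $k$ the number of fundamental terms carrying $\binom{\alpha}{k}k!$ is $\sum_\lambda b_{n-k}(\mu,\lambda)\,K_{\lambda(1^n)}$, and extracting the coefficient of $m_{1^n}$ and invoking Proposition \ref{eulerianstirlingprop} shows this equals $n!\,S(n,k)$. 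Since there are exactly $n!\,S(n,k)$ pairs $(\pi,\beta)$ with $|\beta|=k$, any bijection between these pairs and the fundamental terms defines an admissible $\rho$, whose image lies in $\{1,\dots,n-1\}$ because every $Des(T)$ does.

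The main obstacle is twofold. The general argument rests entirely on the still-open Schur positivity of Conjecture \ref{C2}, so unconditionally only the two extreme shapes are settled; and even where the counts are known to match, the construction yields only a non-canonical $\rho$ defined through an arbitrary counting bijection. The genuinely hard and most interesting part is to replace these bijections by an explicit, shape-uniform rule: even for $\mu=(n)$ the rook-placement matching above is not canonical, and producing a natural statistic on $(\pi,\beta)$ that simultaneously selects the coefficient $\binom{\alpha}{|\beta|}|\beta|!$ and the descent set $\rho(\pi,\beta,\mu)$ is where the real combinatorial content lies.
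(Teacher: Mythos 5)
Your proposal is sound, and it is important to note at the outset that the statement is a conjecture: the paper offers no proof, only the same partial evidence you assemble, so the correct standard of comparison is against the paper's surrounding discussion. Your treatment of $\mu=(1^n)$ (via $J_{(1^n)}^{(\alpha)}=n!\,e_n$, $e_n=s_{(1^n)}=F_{\{1,\ldots,n-1\}}$, and $\alpha^n=\sum_{\beta\in B_n}\binom{\alpha}{|\beta|}|\beta|!$) and your conditional argument for general $\mu$ (Conjecture \ref{C2} plus Gessel's expansion plus the $m_{1^n}$-coefficient count $\sum_\lambda b_{n-k}(\mu,\lambda)K_{\lambda(1^n)}=n!\,S(n,k)$ from Proposition \ref{eulerianstirlingprop}) are exactly the remarks the paper makes after stating the conjecture. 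Where you genuinely go beyond the paper is $\mu=(n)$: the paper does not claim this case of Conjecture \ref{conjbasis2} but instead formulates the stronger Conjecture \ref{conjbasis2muequalsn} with an explicit canonical $\rho(\pi,\beta,(n))=Des(P(f_\beta(\pi)))$, which remains open. Your route—expanding $n!\binom{\alpha+m}{n}$, $m=n-1-des(\pi)$, over the rectangular Ferrers board via Proposition \ref{rookboardprop} inside Corollary \ref{jacksqsymmexp}, then matching counts $\sum_\sigma r_{n-k}(B_{n-1-des(\sigma)})=n!\,S(n,k)$—validly establishes the \emph{existence} form of the conjecture for $\mu=(n)$ unconditionally, a fact the paper has implicitly available through Theorem \ref{rookboardtheorem} (which gives nonnegativity in the $\binom{\alpha}{k}k!$ basis for $\mu=(n)$) but never spells out. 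The trade-off is the one you candidly identify: your $\rho$ for $\mu=(n)$ comes from an arbitrary counting bijection, whereas the paper's Conjecture \ref{conjbasis2muequalsn} demands a canonical rule, and closing that gap—not the cardinality bookkeeping—is where the open combinatorial content of the conjecture actually lies.
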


For the $\mu=(n)$ case, we first define a function. Given $\pi \in S_n$ and $\beta \in B_n$, define $f_\beta(\pi)$ to be a rearrangement of $\pi$ so that if $\{b_1,\ldots,b_k\}\in \beta$, then $b_1,\ldots,b_k$ appear in increasing order in $f_\beta(\pi)$ without changing the position of the subsequence. For example, given $\beta = \{\{1,4\},\{2,3,5\}\}$ and $\pi = 24531$, $f_\beta(\pi) = 21354$.

\begin{conjecture}\label{conjbasis2muequalsn}
It holds that
$$\tilde{J}_{(n)}^{(\alpha)}(X) = \sum_{\substack{\pi \in S_n \\ \beta \in B_n}} {\alpha \choose |\beta|}|\beta|! F_{Des(P(f_\beta(\pi)))}(x),$$
where $P(f_\beta(\pi))$ is the insertion tableau of $f_\beta(\pi)$ given by RSK.
\end{conjecture}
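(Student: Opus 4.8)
The plan is to deduce Conjecture~\ref{conjbasis2muequalsn} from the already-established first-basis expansion in Corollary~\ref{jacksqsymmexp} by converting between the two bases and then verifying a purely combinatorial identity on colored permutations. Since $\{\binom{\alpha}{k}k!\}_{0\le k\le n}$ is a basis for the polynomials in $\alpha$ of degree at most $n$, and both sides of the conjecture are such polynomials with quasisymmetric coefficients, it suffices to prove the identity after specializing $\alpha$ to every $N\in\mathbb N$. At $\alpha=N$ the weight $\binom{N}{|\beta|}|\beta|!$ counts injections $\iota$ from the blocks of $\beta$ into $[N]$, and a pair $(\beta,\iota)$ is exactly the data of a function $\phi\colon[n]\to[N]$ with $\beta$ the partition into nonempty fibers of $\phi$. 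Thus the right-hand side at $\alpha=N$ becomes $\sum_{\pi\in S_n}\sum_{\phi\colon[n]\to[N]}F_{Des(P(f_{\beta(\phi)}(\pi)))}(x)$, and by Corollary~\ref{jacksqsymmexp} the target is the colored-permutation identity
\[
\sum_{\pi\in S_n}\sum_{\phi\colon[n]\to[N]}F_{Des(P(f_{\beta(\phi)}(\pi)))}
= n!\sum_{\pi\in S_n}\binom{N+n-1-des(\pi)}{n}F_{Des(P(\pi))},
\]
which for all $N$ yields the conjecture by polynomiality in $N$.

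Next I would reparametrize the left-hand side. A colored permutation $(\pi,\phi)$ is equivalent to the color word $W=\phi\circ\pi\in[N]^n$, the value sets $V_j=\phi^{-1}(j)$, and a scramble $\sigma\in\prod_j S_{|V_j|}$ within the fibers; crucially $\rho:=f_{\beta(\phi)}(\pi)$ places the elements of each $V_j$ in increasing order in the positions of color $j$, so $\rho$ is independent of $\sigma$. Summing out $\sigma$ collects a factor $\prod_j|V_j|!$ and reduces the left-hand side to $\sum_{\rho\in S_n} w_N(\rho)\,F_{Des(P(\rho))}$, where $w_N(\rho)=\sum_{k} g_k(\rho)\binom{N}{k}k!$ and $g_k(\rho)$ is the number, weighted by $\prod_j|B_j|!$, of partitions of $[n]$ into $k$ blocks each of which is an increasing subsequence of $\rho$. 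Applying the expansion $\sum_{P\in\textnormal{SYT}(\mu)}F_{Des(P)}=s_\mu$ and collecting by the insertion tableau $P(\rho)$, the identity is equivalent to the statement that for each shape $\mu$ and each $P_0\in\textnormal{SYT}(\mu)$,
\[
\sum_{\rho\colon P(\rho)=P_0} w_N(\rho)
= n!\sum_{Q\in\textnormal{SYT}(\mu)}\binom{N+n-1-des(Q)}{n},
\]
the right-hand side being independent of the choice of $P_0$ within its shape.

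The hard part will be this last invariance, and I expect it to be the main obstacle, because $w_N(\rho)$ is not a Knuth (nor a dual-Knuth) invariant: the permutations $213$ and $231$ have the same insertion tableau yet different weights $w_N$, and $213,312$ share a recording tableau while also having different weights. Consequently $\sum_{\rho}w_N(\rho)F_{Des(P(\rho))}$ does not split into Assaf dual-equivalence classes term by term, so the clean counting argument of Theorem~\ref{QYTqsymexp} does not transfer directly. What rescues the identity (as one checks for small $n$) is that the per-$P_0$ totals $\sum_{\rho\colon P(\rho)=P_0}w_N(\rho)$ collapse to a shape-only quantity even though the individual summands do not. A proof therefore seems to require a genuinely global device: either a weight-preserving bijection that balances $w_N$ across the permutations with a fixed insertion tableau, or a colored analogue of dual equivalence graphs acting on the pairs $(\rho,\phi)$ compatibly with $des$ and with the increasing-subsequence (hence RSK-shape) constraint, and reorganizing the colored objects into honest $\textnormal{SYT}(\mu)$-classes. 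Constructing such a structure, and identifying the resulting constant with $n!\sum_{Q\in\textnormal{SYT}(\mu)}\binom{N+n-1-des(Q)}{n}$ through the conjugation symmetry of Lemma~\ref{QYTsymmetry}, is the crux and is what keeps the statement conjectural.
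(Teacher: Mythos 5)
The statement you are proving is left open in the paper: Conjecture \ref{conjbasis2muequalsn} is stated without proof (the paper verifies only the $\mu=(1^n)$ analogue of the surrounding Conjecture \ref{conjbasis2} via Proposition \ref{eulerianstirlingprop}), so there is no paper argument to compare against; the only question is whether your argument closes the conjecture, and --- as you yourself concede --- it does not. Your chain of reductions is nevertheless sound, and I verified each link. Polynomiality in $\alpha$ of the coefficient of every fundamental quasisymmetric function legitimately reduces the claim to integer specializations $\alpha=N$; the weight $\binom{N}{|\beta|}|\beta|!$ does enumerate the colorings $\phi\colon[n]\to[N]$ whose nonempty fibers form $\beta$; since $f_{\beta(\phi)}(\pi)$ depends only on which positions carry which fiber, summing out the within-fiber scrambles correctly produces $\sum_{\rho\in S_n} w_N(\rho)F_{Des(P(\rho))}$ with $w_N(\rho)=\sum_k g_k(\rho)\binom{N}{k}k!$ as you define it; and your shape constant is the right one, since by Theorem \ref{QYTtheorem} and Lemma \ref{QYTsymmetry} one has $\langle\tilde J_{(n)}^{(N)},s_\mu\rangle = n!\sum_{Q\in\textnormal{SYT}(\mu)}\binom{N+n-1-des(Q)}{n}$. (Spot-check at $n=3$: for $P_0$ with reading word $213$, the fiber $\{213,231\}$ gives $w_N$-sum $12\binom{N}{3}+12\binom{N}{2}=12\binom{N+1}{3}$, matching $3!\sum_Q\binom{N+1}{3}$ over the two tableaux $Q$ of shape $(2,1)$, each with one descent.) One small overstatement: your per-$P_0$ identity is sufficient for, not equivalent to, the quasisymmetric identity, because $P_0\mapsto Des(P_0)$ is not injective; it is, however, the natural target and is what the data supports.

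The genuine gap is exactly where you locate it: the invariance claim that $\sum_{\rho\colon P(\rho)=P_0}w_N(\rho)$ depends only on the shape of $P_0$ is unproven, and it carries the entire content of the conjecture. Your observation that $w_N$ is neither a Knuth nor a dual-Knuth invariant is correct (indeed $w_N(213)=6\binom{N}{3}+8\binom{N}{2}$ while $w_N(231)=w_N(312)=6\binom{N}{3}+4\binom{N}{2}$, with $P(213)=P(231)$ and $Q(213)=Q(312)$), and it rules out transplanting the componentwise dual-equivalence argument of Theorem \ref{QYTqsymexp}: the sum $\sum_\rho w_N(\rho)F_{Des(P(\rho))}$ does not decompose class by class, so only a global balancing mechanism --- a weight-redistributing bijection on each fixed-$P$ fiber, or a colored dual equivalence on pairs $(\rho,\phi)$ compatible with descents and the increasing-fiber constraint --- could finish, and you name such a device without constructing it. The net result is a correct and genuinely useful reformulation: you have shown Conjecture \ref{conjbasis2muequalsn} is equivalent to (and would follow from the slightly stronger, per-tableau form of) a finite combinatorial identity about increasing-block set partitions of permutations grouped by insertion tableau, which is a plausible route to a future proof, but it is not a proof.
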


Proposition \ref{eulerianstirlingprop} also proves the $\mu = (1^n)$ case here, and we can make similar remarks as above. That is, if we assume Schur positivity, that Conjecture \ref{conjbasis2} is true for some $\rho$ and that the fundamental quasisymmetric expansion could help prove Schur positivity in this basis.

\subsection{Rook Boards}
Returning to the problem of Schur positivity, we also had some success approaching the problem with rook boards. We first use Proposition \ref{rookboardprop} to obtain a combinatorial interpretation of the coefficient of $s_\lambda$ in our binomial bases when $\mu = \lambda$ for a hook shape. In general for $\mu = \lambda$, the coefficient of $s_\mu$ is the same as the coefficient of $m_\mu$ in the monomial expansion, so we can obtain from the combinatorial formula for the monomial expansion \cite{KnSa97} that
$$\langle \tilde{J}_\mu^{(\alpha)}(X), s_{\mu}\rangle = \prod_{s\in \mu} (arm(s) + \alpha (leg(s) +1)).$$
When $\mu = (n-\ell, 1^\ell)$ is a hook shape, this product becomes
\begin{align*}
\langle \tilde{J}_\mu^{(\alpha)}(X),&\ s_{\mu}(X)\rangle = \ell!\alpha^\ell ((\ell+1)\alpha + (n-1))(\alpha + (n-2))\cdots(\alpha + 1) \alpha\\
=& \ell\cdot \ell!(\alpha + (n-\ell-2))\cdots(\alpha + 1)\alpha^{\ell+2}+ \ell!(\alpha+(n-\ell-1))\cdots(\alpha + 1)\alpha^{\ell+1},
\end{align*}
then applying Proposition \ref{rookboardprop} gives the following result.

\begin{proposition} For $\mu = \lambda = (n-\ell, 1^\ell)$, we have
\begin{align*}
\langle \tilde{J}_\mu^{(\alpha)}(X),s_\mu \rangle &= \sum_{k=0}^n (\ell \cdot \ell! h_k(B(c_1,\ldots,c_n)) + \ell! h_k(B(d_1,\ldots,d_n))) {\alpha + k \choose n }\\
& = \sum_{k=0}^n (\ell \cdot \ell! r_k(B(c_1,\ldots,c_n)) + \ell! r_k(B(d_1,\ldots,d_n))) {\alpha \choose k }k!\\
\end{align*}
where $c_1 = c_2 = \cdots = c_{n-\ell-1} = n-\ell-2$ and $c_{n-\ell+i} = n-\ell-1+i$ for $0 \geq i \geq \ell$ and $d_1 = d_2 = \cdots = d_{n-\ell} = n-\ell-1$ and $d_{n-\ell+i} = n-\ell-1+i$ for $1 \geq i \geq \ell$.
\end{proposition}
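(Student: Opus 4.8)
The plan is to take the two-term decomposition of $\langle \tilde{J}_\mu^{(\alpha)}(X), s_\mu(X)\rangle$ derived just above the statement and apply Proposition \ref{rookboardprop} to each term separately. Each term is a scalar ($\ell\cdot\ell!$ or $\ell!$) times a product of $n$ linear factors in $\alpha$, so the only real work is to recognize each of these products as the Goldman--Joichi--White product $\prod_{i=1}^n(\alpha + c_i - i + 1)$ for the prescribed Ferrers boards $B(c_1,\ldots,c_n)$ and $B(d_1,\ldots,d_n)$. Because Proposition \ref{rookboardprop} simultaneously expands such a product in both bases $\bigl\{\binom{\alpha+k}{n}\bigr\}$ and $\bigl\{\binom{\alpha}{k}k!\bigr\}$, identifying the products once yields both displayed equalities at the same time.

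For the first term I would verify that $\alpha^{\ell+2}(\alpha+1)\cdots(\alpha+(n-\ell-2)) = \prod_{i=1}^n(\alpha + c_i - i + 1)$ by splitting the index range at the point where the column heights jump. For $1\le i \le n-\ell-1$ the height is $c_i = n-\ell-2$, so the shift $c_i - i + 1 = (n-\ell-1)-i$ decreases through $n-\ell-2, n-\ell-3, \ldots, 0$, producing exactly the factors $\alpha(\alpha+1)\cdots(\alpha+(n-\ell-2))$; for $n-\ell \le i \le n$ the height is $c_i = i-1$, so $c_i - i + 1 = 0$ and each of the remaining $\ell+1$ indices contributes a factor $\alpha$. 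The product is therefore $\alpha^{\ell+1}\cdot\alpha(\alpha+1)\cdots(\alpha+(n-\ell-2)) = \alpha^{\ell+2}(\alpha+1)\cdots(\alpha+(n-\ell-2))$, as required. The same bookkeeping for $B(d_1,\ldots,d_n)$, now splitting at $i=n-\ell$, gives $\prod_{i=1}^n(\alpha + d_i - i + 1) = \alpha^{\ell+1}(\alpha+1)\cdots(\alpha+(n-\ell-1))$, matching the second term.

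With both products identified, I would apply Proposition \ref{rookboardprop} to each board, pull the scalars $\ell\cdot\ell!$ and $\ell!$ through the resulting sums, and add. Before invoking the proposition I would confirm its hypotheses: both $(c_i)$ and $(d_i)$ are weakly increasing with values in $\{0,1,\ldots,n\}$, since each sequence is a constant block followed by the staircase $c_i = i-1$, whose maximum value $n-1$ does not exceed $n$. Matching the $\binom{\alpha+k}{n}$ side is then immediate from the hit-number expansion, while matching the $\binom{\alpha}{k}k!$ side comes from the rook-number expansion of the proposition, keeping track of its index convention.

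I expect no serious obstacle here: the computation is entirely mechanical once the products are matched to the boards. The single point demanding care is the shifted indexing $c_i - i + 1$, where an off-by-one in locating the boundary between the constant block and the staircase block would misalign the linear factors. Accordingly I would check the boundary indices $i = n-\ell-1, n-\ell$ for the $c$-board (and $i=n-\ell, n-\ell+1$ for the $d$-board) by hand, as these are precisely the cells where the column height first increases and where the shift first vanishes.
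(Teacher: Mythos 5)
Your proposal is correct and is essentially the paper's own proof: the paper derives the two-term factorization displayed just above the proposition and then simply cites Proposition \ref{rookboardprop}, so your index-by-index identification of the two products with $\prod_{i=1}^n(\alpha+c_i-i+1)$ and $\prod_{i=1}^n(\alpha+d_i-i+1)$, including the boundary checks at $i=n-\ell-1,n-\ell$, merely spells out the bookkeeping the paper leaves implicit. One remark on your ``index convention'' caveat: a literal application of Proposition \ref{rookboardprop} produces $r_{n-k}$, not $r_k$, as the coefficient of $\binom{\alpha}{k}k!$ (consistent with Theorem \ref{rookboardtheorem} and Conjecture \ref{C2}), so the $r_k$ in the proposition as printed is a typo, and your derivation establishes the corrected statement.
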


\begin{example}$B(c_1,\ldots,c_4)$ and $B(d_1,\ldots,d_4)$ for $\mu=\lambda=(3,1)$.
\begin{displaymath}
\tableau{
\ & \ &\ &\ \\
\ & \ & \ & \graybox\\
\ & \ & \graybox & \graybox \\
\graybox & \graybox & \graybox & \graybox \\}
\ \ \ \ \ 
\tableau{
\ & \ &\ &\ \\
\ & \ & \ & \graybox\\
\graybox & \graybox & \graybox & \graybox \\
\graybox & \graybox & \graybox & \graybox \\}
\end{displaymath}
\end{example}

This approach also yields a combinatorial interpretation for both bases in the case of $\mu = (n)$. We can obtain $J_{\mu}^{(1/\alpha)}(X)$ via the specialization\\

\noindent\resizebox{1.0\textwidth}{!}{
$J_{\mu}^{(1/\alpha)}(X) = \lim_{t\rightarrow 1} \frac{J_{\mu}(X;t^{1/\alpha},t)}{(1-t)^n} = \lim_{q\rightarrow 1} \frac{J_{\mu}(X;q,q^\alpha)}{(1-q^\alpha)^n}=\lim_{q\to 1} \frac{J_{\mu}(X;q,q^\alpha)}{(1-q)^n}\frac{(1-q)^n}{(1-q^\alpha)^n}=\frac{1}{\alpha^n}\lim_{q \to 1} \frac{J_{\mu}(X;q,q^\alpha)}{(1-q)^n}$}
\\

\noindent so that 
$$\tilde{J}_\mu^{(\alpha)}(X) = \lim_{q\to 1} \frac{J_{\mu}(X;q,q^\alpha)}{(1-q)^n}.$$
Then when $\mu = (n)$, we can apply the limit as $q \to 1$ 
to a result of Yoo \cite[Theorem 3.2]{Yoo12} to obtain
$$\lim_{q\to 1}\frac{J_{(n)}(X;q,q^\alpha)}{(1-q)^n} = \sum_{|\lambda|=n}s_{\lambda} K_{\lambda,1^n} \prod_{(i,j)\in \lambda}(\alpha+i-j),$$
where $(i,j)\in \lambda$ refers to cells of the diagram of $\lambda$ identified with their Cartesian coordinates. Arrange the values of $i-j$ in non-increasing order and rewrite to get the desired $\prod_{i=1}^n (\alpha + c_i-i+1)$ form. It is clear that for any partition $\lambda$, this produces a sequence $0\leq c_1 \leq \cdots \leq c_n \leq n$, so we can apply Proposition \ref{rookboardprop} again to obtain the following.

\begin{theorem}\label{rookboardtheorem}
It holds that 
$$\langle \tilde{J}_{(n)}^{(\alpha)}(X),s_\lambda\rangle = \sum_{k=0}^n K_{\lambda,1^n} h_k(B(c_1,\ldots,c_n)) {\alpha + k \choose n} = \sum_{k=0}^n K_{\lambda,1^n} r_{n-k}(B(c_1,\ldots,c_n)) {\alpha \choose k}k!,$$
with $c_1,\ldots,c_n$ given above.
\end{theorem}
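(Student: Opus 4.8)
The plan is to read off the coefficient of $s_\lambda$ from the specialization of $\tilde J_{(n)}^{(\alpha)}(X)$ that the preceding discussion has already extracted from Yoo's theorem, and then to convert the resulting product into the two binomial bases using Proposition \ref{rookboardprop}. Concretely, the computation just above the statement gives
$$\langle \tilde{J}_{(n)}^{(\alpha)}(X), s_\lambda \rangle = K_{\lambda,1^n}\prod_{(i,j)\in\lambda}(\alpha + i - j),$$
so once the product $\prod_{(i,j)\in\lambda}(\alpha+i-j)$ is rewritten in the Ferrers-board form $\prod_{i=1}^n(\alpha + c_i - i + 1)$ demanded by Proposition \ref{rookboardprop}, the theorem follows immediately by pulling the scalar $K_{\lambda,1^n}$ through both resulting expansions.

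The real content is therefore to verify that the sequence $c_1,\dots,c_n$ produced by sorting the content statistic is a legitimate Ferrers column-height sequence, that is, $0 \le c_1 \le \cdots \le c_n \le n$. First I would list the multiset $\{\,i-j : (i,j)\in\lambda\,\}$, sort it in non-increasing order as $v_1 \ge v_2 \ge \cdots \ge v_n$, and set $c_i := v_i + i - 1$; this matches each factor $\alpha + i - j$ of the product with a factor $\alpha + c_i - i + 1$, so the two products agree as claimed. Monotonicity of the $c_i$ reduces to $c_{i+1} - c_i = v_{i+1} - v_i + 1 \ge 0$, which holds exactly when consecutive sorted values satisfy $v_i - v_{i+1} \le 1$.

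The crux, and the step I expect to be the main obstacle to phrase cleanly, is this last gap condition. I would deduce it from the fact that the \emph{distinct} values of $i-j$ over the cells of a Young diagram form a block of consecutive integers: the cells in the first row realize every value from $0$ down to $1-\lambda_1$, and the cells in the first column realize every value from $0$ up to $\ell(\lambda)-1$, so every diagonal between the two extreme corner diagonals is occupied. Granting this, passing from one value to the next smaller one in the sorted list drops by either $0$ (same diagonal) or exactly $1$ (the next diagonal), whence $v_i - v_{i+1} \in \{0,1\}$ and the $c_i$ are non-decreasing. For the boundary inequalities, the diagonal cell $(1,1)$ always contributes the value $0$, so $v_1 = \max_{(i,j)}(i-j) \ge 0$ gives $c_1 = v_1 \ge 0$, while $v_n = \min_{(i,j)}(i-j) \le 0$ forces $c_n = v_n + n - 1 \le n-1 \le n$.

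Finally I would feed the Ferrers board $B(c_1,\dots,c_n)$ into Proposition \ref{rookboardprop}, which expresses $\prod_{i=1}^n(\alpha + c_i - i + 1)$ simultaneously as $\sum_k h_k(B)\binom{\alpha+k}{n}$ and as $\sum_k r_{n-k}(B)\binom{\alpha}{k}k!$. Multiplying both identities by $K_{\lambda,1^n}$ yields the two claimed expressions for $\langle \tilde J_{(n)}^{(\alpha)}(X), s_\lambda\rangle$, completing the proof. Everything outside the consecutiveness argument is the assembly of identities already established in the excerpt, so the write-up should be short once that combinatorial point is recorded.
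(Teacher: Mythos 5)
Your proposal is correct and follows essentially the same route as the paper: extract the coefficient of $s_\lambda$ from Yoo's specialization, sort the contents $i-j$ into the form $\prod_{i=1}^n(\alpha+c_i-i+1)$, and apply Proposition \ref{rookboardprop}; in fact the paper simply asserts ``it is clear'' that $0\le c_1\le\cdots\le c_n\le n$, whereas you supply the consecutive-diagonals justification. One cosmetic remark: the paper's $(i,j)$ are Cartesian coordinates (so the first row realizes $0,\ldots,\lambda_1-1$ and the first column $0,\ldots,1-\ell(\lambda)$, the reverse of your labeling), but since the distinct contents still form a consecutive block of integers through $0$, your argument goes through unchanged.
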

\begin{example} The values $i-j$ and $B(c_1,\ldots,c_5)$ for $\lambda = (3,2)$.
 \begin{displaymath}
    \raisebox{-45pt}{\tableau{
    -1 & 0 \\
    0 & 1 & 2 \\
    }}\ \ \ \ \ 
    \tableau{
    \ & \ & \ & \ & \ \\
    \ & \ & \ & \ & \ \\
    \ & \ & \ & \graybox & \graybox \\
    \graybox & \graybox & \graybox & \graybox & \graybox \\
    \graybox & \graybox & \graybox & \graybox & \graybox \\}
    \end{displaymath}
\end{example}

In \cite{HOW} it is shown that the rook and hit polynomials of Ferrers boards have only real zeros, so the two results of this section also prove the second part of Conjecture \ref{C1}  for these special cases. 
We note that Theorem \ref{rookboardtheorem} provides a very different looking combinatorial interpretation to the one seen in Theorem \ref{QYTtheorem} for the ${\alpha + k \choose n}$ basis. It would be interesting to find a bijection between the rook board interpretation of Theorem \ref{rookboardtheorem} and the tableau interpretation of Theorem \ref{QYTtheorem}. This may lead to a tableau interpretation for the ${\alpha\choose k } k!$ basis, but also would be of independent interest in understanding the relationship between tableaux and rook boards.

\section{Acknowledgements}
We would like to thank Greta Panova, Emily Sergel, and Andy Wilson for their many helpful discussions and suggestions. Per Alexandersson was partially supported by the Knut and Alice Wallenberg Foundation, James Haglund was partially supported by NSF grant DMS-1600670, and George Wang was partially supported by the NSF Graduate Research Fellowship, DGE-1321851. This paper is the full version of an extended abstract, On the Schur expansion of Jack Polynomials \cite{FPSAC}, that is to appear in the 2018 FPSAC proceedings.

\printbibliography

\end{document}